\newtheorem{theorem}{Theorem}[section]
\newtheorem{lemma}[theorem]{Lemma}
\newtheorem{remark}[theorem]{Remark}
\newtheorem{definition}[theorem]{Definition}
\newtheorem{proposition}[theorem]{Proposition}
\newtheorem{example}{Example}
\newtheorem{case}{Case}
\newlength{\picw}
\newcommand{\dsR}{\mathbb{R}}
\newcommand{\dx}{\;dx}
\DeclareMathOperator{\dvg}{div}
\DeclareMathOperator{\spn}{span}
\newcommand{\caA}{\mathcal{A}}
\newcommand{\caI}{\mathcal{I}}
\newcommand{\caO}{\mathcal{O}}
\newcommand{\caS}{\mathcal{S}}
\newcommand{\cd}{\;}
\title{Guaranteed and Sharp a Posteriori Error Estimates \\
in Isogeometric Analysis
}
\author{Stefan~K.~Kleiss}
\address[Stefan~K.~Kleiss]{Johannes Kepler University, Altenberger Strasse 69, A-4040 Linz, Austria}
\email{stefan.kleiss@jku.at}
\author{Satyendra~K.~Tomar}
\address[Satyendra~K.~Tomar]{Dornacher Strasse 6/21, A-4040 Linz, Austria}
\email[Corresponding author]{tomar.sk.prof@gmail.com}
\thanks{This work was started when both the authors were employed in RICAM, Altenberger Strasse 69, A-4040 Linz, Austria.}
\date{September 8, 2014, ~(First version April 21, 2013)}
\keywords{Isogeometric analysis; B-splines and NURBS; A posteriori error estimates}
\subjclass{65N15, 65N30}
\begin{document}

\begin{abstract}
We present functional-type a posteriori error estimates in isogeometric analysis. These estimates, derived on functional grounds, provide guaranteed and sharp upper bounds of the exact error in the energy norm. {Moreover, since these estimates do not contain any unknown/generic constants, they are fully computable, and thus provide quantitative information on the error.} By exploiting the properties of non-uniform rational B-splines, we present efficient computation of these error estimates. The numerical realization and the quality of the computed error distribution are addressed. The potential and the limitations of the proposed approach are illustrated using several computational examples.
\end{abstract}

\maketitle

\section{Introduction}\label{sec_intro}

The geometry representations in finite element methods (FEM) and computer aided design (CAD) have been developed independent of each other, and are optimized for the purposes within their respective fields. As a consequence, the representations are different from each other, and a transfer of geometry information from CAD to FEM programmes (and vice versa) requires a transformation of geometry data. These transformations are, in general, not only costly, but also prone to approximation errors, and may require manual input.

\emph{Isogeometric analysis} (IGA), introduced by Hughes et al.\ \cite{Hughes05_4135}, see also \cite{Coott09_IGA}, aims at closing this gap between FEM and CAD. The key observation is that it is a widespread standard in CAD to use geometry representations based on non-uniform rational B-splines (NURBS), and that these NURBS basis functions have properties which make them suitable as basis functions for FEM. Instead of transforming the geometry data to a conventional FEM representation, the original geometry description is used directly, and the underlying NURBS functions are used as basis for the discrete solution. This way, the geometry is represented \emph{exactly} in the sense that the geometry obtained from CAD is not changed. Thus, the need for data transformation is eliminated, and furthermore, the exact representation from the coarsest mesh is preserved throughout the refinement process.
IGA has been thoroughly studied and analyzed (see, e.g.,  \cite{Bazi06_1031,Beirao05_271,Cott07_4160,Hughes10_301,Takacs11_3568}), and its potential has been shown by successful applications to a wide range of problems (see, e.g., \cite{Bazi07_173,Bazi08_3,Buffa10_1143,Elgu08_33,Niel2011_3242}).

As mentioned above, the most widely used {spline} representations in CAD are based on NURBS. The straightforward definition of NURBS basis functions leads to a tensor-product structure of the basis functions, and thus of the discretization. Since naive mesh refinement in a tensor-product setting has global effects, the development of local refinement strategies for isogeometric analysis is a subject of current active research. Such local refinement techniques include, for example, T-splines \cite{Bazi10_229, LiEtAl12_63, Scott11_126, Scott12_206, Sede04_276}, truncated hierarchical B-splines (THB-splines) \cite{Giannelli2012_485, GiannelliJS-14}, polynomial splines over hierarchical T-meshes (PHT-splines) \cite{Deng2008_76, Wang11_1438}, and locally-refineable splines (LR-splines) \cite{Dokken2013_331, JohannessenKD-14}.
The issue of adaptive, local refinement is closely linked to the question of efficient a posteriori error estimation (see, e.g., \cite{Ains00_book, Repin08_book} for a general overview on error estimators). In the light of adaptive refinement, an error estimator has to identify the areas where further refinement is needed due to the local error being significantly larger than in the rest of the domain. Hence, an accurate indication of the error distribution is essential.
{Another important objective in computing a posteriori error estimates is to address the \emph{quality assurance}, i.e., to quantify the error in the computed solution with certain degree of \emph{guarantee}.}
However, a posteriori error estimation in isogeometric analysis is still in an infancy stage. To the best of the authors' knowledge, the only published results are \cite{DedeS-12, Doerfel10_264, Johannessen11, KuruVZB-14, Vuong11_3554, Wang11_1438, Xu11_2021, XuMDG-13, ZeeV-11}.
A posteriori error estimates based on hierarchical bases, proposed by Bank and Smith \cite{BankSmith_93}, have been used in \cite{Doerfel10_264, Vuong11_3554}. The reliability and efficiency of this approach is subjected to the saturation assumption on the (enlarged) underlying space and the constants in the strengthened Cauchy inequality. As the authors remarked, the first assumption is critical and its validity depends on the considered example. Moreover, an accurate estimation of constants in the strengthened Cauchy inequality requires the solution of generalized minimum eigenvalue problem.
{As noted in \cite[Page 41]{Johannessen11}, this approach delivers \emph{less than satisfactory} results.}
Residual-based a posteriori error estimates have been used in \cite{Johannessen11, Wang11_1438, Xu11_2021, XuMDG-13}. This approach requires the computation of constants in Clement-type interpolation operators. Such constants are mesh (element) dependent, often generic/unknown or incomputable for general element shape; and the global constant often over-estimates the local constants, and thus the exact error.
{This fact has been explicitly stated by the authors in \cite[Pages 42-43]{Johannessen11} and in \cite[Remark 1]{Wang11_1438}}.
Goal-oriented error estimation approach has been studied in \cite{DedeS-12, KuruVZB-14, ZeeV-11}. The results presented in these studies show that neither the estimates of this approach are \emph{guaranteed} to be an upper bound, nor the efficiency indices of the estimates are sharp. Moreover, this approach also requires the solution of an adjoint problem, the cost of which can not be entirely neglected.
The approach of Zienkiewicz-Zhu type a posteriori error estimates is based on post-processing of approximate solutions, and depend on the superconvergence properties of the underlying basis. To the best of authors' knowledge, superconvergence properties for B-splines (NURBS) functions are not yet known.
Summarily, in general situations, the reliability and efficiency of these methods often depend on undetermined constants, which is not suitable for quality assurance purposes.
In this paper, we present \emph{functional-type a posteriori error estimates} for isogeometric discretizations. These error estimates, which were introduced in \cite{Repin97_201, Repin99_4311, Repin00_481} and have been studied for various fields (see \cite{Repin08_book} and the references therein), provide guaranteed, sharp and fully computable bounds (without any generic undetermined constants).
These estimates are derived on purely functional grounds (based on integral identities or functional analysis) and are thus applicable to any conforming approximation in the respective space. 
For elliptic problems with the weak solution $u\in H_0^1(\Omega)$, these error bounds involve computing an auxiliary function $y \in H(\Omega,\dvg)$.
{In order to get a sharp estimate, this function $y$ is computed by solving a \emph{global} problem. This could be perceived as a drawback when compared to error estimation techniques which rely on local computations and are thus apparently cheaper.
However, as briefly explained above, our emphasis is not only on adaptivity, but also on \emph{quantifying the error in the computed solution} (and thus guaranteeing the quality of the computed solution).
Therefore, the associated cost should be weighed against the stated objectives. To the best of authors' knowledge, there is no other, particularly cheaper, method available which can fulfill these objectives in general situations.
In this paper, we will elaborate how such estimates can be computed efficiently by a proper set-up of the global problem.}
Two aspects motivate the application of functional-type error estimates in IGA.
Firstly, unlike the standard Lagrange basis functions, NURBS basis functions of degree $p$ are, in general, globally $C^{p-1}$-continuous. Hence, NURBS basis functions of degree $p\geq 2$ are, in general, at least $C^1$-continuous, and therefore, {their gradients are} automatically in $H(\Omega,\dvg)$. Thereby, we avoid constructing complicated functions in $H(\Omega,\dvg)$, in particular for higher degrees (see, e.g., \cite{Buffa11_1407,BuffaEtAl11_818,EvaEtAl13_671}).
Secondly, since the considered problem is solved in an isogeometric setting, an efficient implementation of NURBS basis functions is readily available, which can be used to construct the above mentioned function $y$. 
Hence, applying the technique of functional-type a posteriori error estimation in a setting that relies only on the use of already available NURBS basis functions is greatly appealing.
The remainder of this paper is organized as follows. 
In Section~\ref{sec_prelims}, we define the model problem, and recall the definition and some important properties of B-spline and NURBS basis functions.
{In Section~\ref{sec_FuncEE}, we first recall functional-type a posteriori error estimates and known implementation issues. Then, we derive a quality criterion and the local error indicator.}
In Section~\ref{sec_effcomp}, we discuss a cost-efficient realization of the proposed error estimator using an illustrative numerical example. 
Further numerical examples are presented in Section~\ref{sec_numex}, and finally, conclusions are drawn in Section~\ref{sec_Conc}.

\section{Preliminaries}
\label{sec_prelims}

In order to fix notation and to provide an overview, we define the model problem and recall the definition and some aspects of isogeometric analysis in this section.

\subsection{Model Problem}

Let $\Omega \subset \dsR^2$ be an open, bounded and connected Lipschitz domain with boundary $\partial \Omega$. 
We shall consider the following model problem:

Find the scalar function $u:\overline{\Omega} \rightarrow \dsR$ such that\begin{equation}
\begin{array}{rcl@{\qquad}l}
- \dvg ( A \nabla u) &=& f & \text{in~} \Omega,\\
 u &=& u_D & \text{on~} \Gamma_D = \partial \Omega,
\end{array}
\label{e_defpde}
\end{equation}
where $A$, $f$ and $u_D$ are given data. We assume that $A$ is a symmetric positive definite matrix and has a positive inverse $A^{-1}$, and that there exist constants $c_1,c_2>0$ such that
\begin{eqnarray}
c_1 |\xi|^2 \leq A\xi \cdot \xi \leq c_2 |\xi|^2,\quad \forall \xi \in \dsR^2.
\label{e_Abounds}
\end{eqnarray}
Then, the norms
\begin{equation}
\| v \|_A^2 =  \int_\Omega A v \cdot v \dx, \quad
\| v \|_{\bar{A}}^2 = \int_\Omega A^{-1} v \cdot v \dx, \label{e_normaabar}
\end{equation}
are equivalent to the $L^2$-norm $\| v \|^2 = \int_\Omega v \cdot v \dx$.
The weak form of problem \eqref{e_defpde} can be written as follows:

Find $u \in V_g$, such that
\begin{eqnarray}
a(u,v) = f(v), \quad \forall v\in V_0,
\label{e_defvarp}
\end{eqnarray}
where $V_0 \subset H^1(\Omega)$ contains the functions which vanish on $\Gamma_D$, and $V_g \subset H^1(\Omega)$ contains the functions satisfying the Dirichlet boundary conditions $u=u_D$ on $\Gamma_D$.
We assume that the problem data $A$, $f$ and $u_D$ are given such that the bilinear form $a(\cdot,\cdot)$ is bounded, symmetric and positive definite, and that $f(\cdot)$ is a bounded linear functional. The energy norm of a function $v$ is given by $\| \nabla v \|_A = \sqrt{a(v,v)}$. 
Note that we have considered the Dirichlet problem only for the sake of simplicity. Functional-type error estimates can be easily generalized to problems with mixed boundary conditions, see, e.g., \cite{Lazarov2009_952,Repin08_book}.
We discretize the problem \eqref{e_defvarp} in the standard way by choosing a finite-dimensional manifold $V_h \subset V_g$ and looking for a \emph{discrete solution} $u_h \in V_h$. This leads to a linear system of equations of the form
\begin{eqnarray}
\underline{K}_h \underline{u}_h = \underline{f}_h,
\label{e_Khuhfh}
\end{eqnarray}
where $\underline{K}_h$ is the stiffness matrix induced by the bilinear form $a(\cdot,\cdot)$, $\underline{f}_h$ is the load vector, and $\underline{u}_h$ is the coefficient vector of the {discrete solution $u_h$}.

\subsection{B-Splines, NURBS and Isogeometric Analysis}
\label{sec_Bsplines}

We briefly recall the definition of B-spline basis functions and NURBS mappings. We only provide the basic definitions and properties relevant for the scope of this paper. 
For detailed discussions of NUR\-BS basis functions, geometry mappings and their properties, we refer to, e.g., \cite{Coott09_IGA,Cott07_4160,Hughes05_4135,Piegl1997} and the references therein. The following standard definitions and statements can also be found there.

Let $p$ be a non-negative \emph{degree} and let $s = (s_1,\ldots,s_{m})$ be a \emph{knot vector}
with $s_i \leq s_{i+1}$ for all $i$.
We consider only \emph{open knot vectors}, i.e., knot vectors $s$ where the multiplicity of a knot is at most $p$, except for the first and last knot which have multiplicity $p+1$.
For simplicity, we assume that $s_1 = 0$ and $s_{m}=1$, which can be easily achieved by a suitable scaling.
The $n = m-p-1$ univariate \emph{B-spline basis functions} $B_{i,p}^{s}: (0,1)\rightarrow \dsR$, $i=1,\ldots,n$, are defined recursively as follows:
\begin{eqnarray*}
B_{i,0}^{s}(\xi) &=&
	\left\{\begin{array}{c@{\quad}l@{\quad}l}
		1 & \mathrm{for} & s_i \leq \xi < s_{i+1} \\
		0 & \mathrm{else}
	\end{array}\right.\\
B_{i,p}^{s}(\xi) &=&
	\frac{\xi-s_{i}}{s_{i+p}-s_{i}} B_{i,p-1}^{s}(\xi)
	+\frac{s_{i+p+1}-\xi}{s_{i+p+1}-s_{i+1}} B_{i+1,p-1}^{s}(\xi).
\end{eqnarray*}
Whenever a zero denominator appears in the definition above, the corresponding function $B^s_{i,p}$ is zero, and the whole term is considered to be zero.
For open knot vectors, the first and last basis function are interpolatory at the first and the last knot, respectively.
The derivatives of B-spline basis functions are given by the following formula:
\begin{eqnarray*}
\partial_\xi B_{i,p}^s(\xi)
&=&
\frac{p}{s_{i+p}-s_i} B_{i,p-1}^s(\xi) - \frac{p}{s_{i+p+1}-s_{i+1}} B_{i+1,p-1}^s(\xi).
\end{eqnarray*}

B-spline basis functions of degree $p$ are, in general, globally $C^{p-1}$-continuous. 
In the presence of repeated knots, the continuity reduces according to the multiplicity, i.e., if a knot appears $k$ times, the continuity of a B-spline basis function of degree $p$ at that knot is $C^{p-k}$.

Let $\{B_{i,p}^s\}_{i=1}^{n_1}$ and $\{ B_{j,q}^t \}_{j=1}^{n_2}$ be two families of B-spline basis functions defined by the degrees $p$ and $q$, and the open knot vectors 
\[
s =(s_1,\ldots,s_{n_1+p+1}),\ 
t =(t_1,\ldots,t_{n_2+q+1}), 
\]
respectively.
We denote the set of all double-indices $(i,j)$ by 
\[
 \caI_R = \{(i,j):\ i\in\{1,\ldots,n_1\}, j\in\{1,\ldots,n_2\}\}.
\]
Let $w_{(i,j)}$, $(i,j)\in\caI_R$, be positive \emph{weights}.
The \emph{bivariate NURBS basis functions} $R_{(i,j)}(\xi_1,\xi_2)$, $(i,j)\in\caI_R$ are defined as follows:
\begin{eqnarray*}
  R_{(i,j)}(\xi_1,\xi_2) &=&
  \frac{ { w_{(i,j)} \cd  B_{i,p}^s(\xi_1) \cd B^t_{j,q}(\xi_2) } }{ \sum_{(k,\ell)\in\caI_R } w_{(k,\ell)} B_{k,p}^s(\xi_1) \cd B^t_{\ell,q}(\xi_2) }.
\end{eqnarray*}
The continuity of the B-spline basis functions is inherited by the NURBS basis functions. Note that B-splines can be seen as a special case of NURBS with all weights being equal to one. Hence, we will not distinguish between these two and we will only use the term \emph{NURBS} in the remainder of the paper.
The set of functions 
\[
\hat{V}_h = \spn \{ R_{(i,j)},\ (i,j)\in\caI_R \},
\]
associated with the \emph{parameter domain} $\hat{\Omega} = (0,1)^2$, is uniquely determined by the degrees $p$ and $q$, the knot vectors $s$ and $t$, and the weights $w$.
To reflect the associated polynomial degrees in respective dimensions, we will also use the notation $\caS^{p,q}_{h}$ for $\hat{V}_h$, which denotes the NURBS function of degree $p$ and $C^{p-1}$-continuity in the first coordinate, degree $q$ and $C^{q-1}$-continuity in the second coordinate, and where the parameter $h$ is the characteristic cell size (non-vanishing knot-span) of the mesh for $\hat{V}_h$.
Given the set of functions $\hat{V}_h$ and a \emph{control net} of \emph{control points} $P_{(i,j)} \in \dsR^2$, where $(i,j)\in \caI_R$, the two-dimensional \emph{NURBS-surface} $G:\hat{\Omega}\rightarrow \Omega$ is defined by
\begin{equation}
  G(\xi_1,\xi_2) =
  \sum_{(i,j)\in\caI_R} R_{(i,j)}(\xi_1,\xi_2)\cd P_{(i,j)}.
  \label{glg_NURBS_Surf}
\end{equation}
We refer to $\Omega = G(\hat{\Omega})$ as the \emph{physical domain}.
We assume that the geometry mapping is continuous and bijective (i.e., not self-penetrating), which are natural assumptions for CAD-applications.

In isogeometric analysis, the isoparametric principle is applied by using the same basis functions for the discrete solution $u_h$ which are used for representing the geometry. 
For detailed discussion, we refer the reader to, e.g., \cite{Coott09_IGA,Cott07_4160,Hughes05_4135}.
The discrete solution $u_h$ on the physical domain $\Omega$ is represented as follows:
\begin{eqnarray}
u_h(x) = \sum_{(i,j)\in\caI_R} u_{(i,j)}\ \big({R}_{(i,j)} \circ G^{-1}\big)(x),
\end{eqnarray}
where $u_{(i,j)}\in \dsR$ are real-valued coefficients which form the coefficient vector $\underline{u}_h$. The discrete functions space is thus defined by
\[
V_h = \spn \{ R_{(i,j)} \circ G^{-1},\ (i,j)\in \caI_R\}.
\]

The initial mesh, and thereby the basis functions on this initial mesh, are assumed to be given via the geometry representation of the computational domain, i.e., the initial discretization is already determined by the problem domain.
The exact representation of the geometry on the initial (coarsest) level is preserved in the process of mesh refinement.

As mentioned in the introduction, the straightforward definition of NURBS basis functions, leads to a tensor-product structure of the discretization, which is the focus of this paper. Nevertheless, the error estimator presented herein is also applicable to local refinement techniques (e.g., T-splines, THB-splines, PHT-splines, LR-splines, see Section~\ref{sec_intro}) since it is derived purely on functional grounds.

\section{Functional-type a Posteriori Error Estimates}
\label{sec_FuncEE}

{In the first two parts of this section, we will discuss the well-known theoretical upper bound for the error in the energy norm (see, e.g., \cite{Repin97_201,Repin99_4311,Repin00_481,Repin08_book}), and we recall how to minimize this upper bound in order to get a sharp error estimate (see, e.g., \cite{Kraus11_1175,Lazarov2009_952}). Thereafter, in Section~\ref{sec_qualcrit}, we will derive a quality criterion from the discussed theory.} We will comment on the realization in the isogeometric context in Section~\ref{sec_effcomp}.

\subsection{Guaranteed Upper Bound for the Error}
\label{sec:Majorant}

The starting point for the proposed method is the following main result, which gives an upper bound for the error in the energy norm. It can be found, e.g., in \cite{Repin99_4311,Repin00_481,Repin08_book}. 
\begin{theorem}\label{thm_est1}
Let $C_\Omega$ be the constant in the Friedrich's type inequality $\| v \| \leq C_{\Omega} \| \nabla v \|_A,\ \forall v \in V_0$.
Let $u$ be the exact solution of the problem \eqref{e_defvarp}, and let $u_h\in V_h$ be an approximate solution. 
Then, the following estimate holds:
\begin{equation}
\| \nabla  u - \nabla u_h \|_A
\leq
\|  A \nabla u_h - y \|_{\bar{A}} + C_\Omega \| \dvg y + f \|,
\label{e_DefMoplusRaw}
\end{equation}
where $y$ is an arbitrary vector-valued function in $H(\Omega,\dvg)$, and the norms are as defined in \eqref{e_normaabar}. \end{theorem}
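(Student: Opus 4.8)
The plan is to work directly with the error $e = u - u_h$ and to bound its squared energy norm $\|\nabla e\|_A^2 = a(e,e)$ from above. Since both $u$ and $u_h$ satisfy the Dirichlet data on $\Gamma_D$, the error $e$ lies in $V_0$, so I may test the weak formulation \eqref{e_defvarp} with $v = e$. This gives $a(u,e) = f(e)$, and subtracting $a(u_h,e)$ yields the starting identity $a(e,e) = f(e) - a(u_h,e) = \int_\Omega f e \dx - \int_\Omega A\nabla u_h \cdot \nabla e \dx$. At this stage no auxiliary function has entered; the freedom in $y$ will be introduced in the next step.

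First I would bring in the arbitrary field $y \in H(\Omega,\dvg)$ through a Green-type identity. Because $\dvg y \in L^2(\Omega)$ and $e$ has zero trace on $\partial\Omega = \Gamma_D$, integration by parts gives the vanishing relation $\int_\Omega (\dvg y)\,e \dx + \int_\Omega y\cdot\nabla e \dx = 0$. Adding this zero to the identity above and regrouping the terms, I obtain
\begin{equation*}
a(e,e) = \int_\Omega (f + \dvg y)\,e \dx + \int_\Omega (y - A\nabla u_h)\cdot\nabla e \dx,
\end{equation*}
which splits the residual into an equilibrium part (involving $f + \dvg y$) and a flux/constitutive part (involving $y - A\nabla u_h$).

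Next I would estimate the two integrals separately. For the first, the Cauchy--Schwarz inequality in $L^2(\Omega)$ followed by the Friedrich's-type inequality $\|e\| \le C_\Omega \|\nabla e\|_A$ yields $\big|\int_\Omega (f+\dvg y)e\dx\big| \le C_\Omega \|\dvg y + f\|\,\|\nabla e\|_A$. For the second, I would exploit the symmetric positive definiteness of $A$ to factor $A = A^{1/2}A^{1/2}$ and rewrite the integrand as $(A^{-1/2}w)\cdot(A^{1/2}\nabla e)$ with $w = y - A\nabla u_h$; a pointwise Cauchy--Schwarz together with the definitions in \eqref{e_normaabar} then gives $\big|\int_\Omega w\cdot\nabla e\dx\big| \le \|w\|_{\bar{A}}\,\|\nabla e\|_A = \|A\nabla u_h - y\|_{\bar{A}}\,\|\nabla e\|_A$.

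Combining the two bounds gives $\|\nabla e\|_A^2 \le \big(\|A\nabla u_h - y\|_{\bar{A}} + C_\Omega\|\dvg y + f\|\big)\|\nabla e\|_A$, and dividing by $\|\nabla e\|_A$ (the claim being trivial when it vanishes) produces \eqref{e_DefMoplusRaw}. The only genuinely delicate point is the Green-type identity: I would need $y$ to be an admissible $H(\Omega,\dvg)$ field so that $y\cdot n$ is well defined as a boundary functional and the duality pairing $\langle y\cdot n, e\rangle_{\partial\Omega}$ indeed vanishes for $e \in V_0$. Everything else reduces to Cauchy--Schwarz and the assumed norm equivalences, and I emphasize that the estimate holds for \emph{every} such $y$, since $y$ was never constrained beyond membership in $H(\Omega,\dvg)$.
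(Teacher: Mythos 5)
Your proof is correct and is precisely the standard argument for this estimate: the paper itself does not reprint a proof of Theorem~\ref{thm_est1}, deferring instead to the cited works of Repin, where the derivation proceeds exactly as you write it --- the error identity $a(e,e)=f(e)-a(u_h,e)$, insertion of the Green-type identity $\int_\Omega (\dvg y)\,e\dx + \int_\Omega y\cdot\nabla e\dx = 0$ valid for $y\in H(\Omega,\dvg)$ and $e\in V_0$, the split into the equilibrium and constitutive residuals, Cauchy--Schwarz with the $A^{1/2}$ factorization together with the Friedrich's-type inequality, and division by $\|\nabla e\|_A$. Your closing remark about the normal-trace duality pairing is exactly the right technical justification for the Green-type identity, so there is no gap.
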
%

\noindent The constant $C_\Omega$ depends only on the domain $\Omega$ and the coefficient matrix $A$ (but not on the underlying mesh), see, e.g., \cite{Lazarov2009_952,Repin08_book}.
Note that $C_\Omega$ can be computed either numerically or, if one can find a domain $\Omega_\square \supset \Omega$, where $\Omega_\square$ is a square domain with side-length $\ell$, then $C_\Omega \leq c_2 \tfrac{\ell}{\pi \sqrt{d}}$, where $d$ is the dimension and $c_2$ is the constant in \eqref{e_Abounds}.

Note that, if we choose $y$ via the (unknown) exact solution $y=A\nabla u$, both sides of \eqref{e_DefMoplusRaw} coincide. Hence, the estimate is sharp in the sense that, for any fixed $u_h$, we can find a function $y$ such that the upper bound is as close to the exact error as desired.
The estimate given in Theorem~\ref{thm_est1} is a guaranteed and fully computable upper bound for any conforming approximation $u_h \in V_{g}$.
In the following, we describe some approaches to construct the function $y$ and discuss their relative merits. For this reason, we consider a numerical example, referred to as \emph{Example~\ref{ex:sin_6pix_3piy}} in the remainder, whose solution is a smoothly varying function in both directions.
\begin{example}\label{ex:sin_6pix_3piy}
\textbf{Sinus function in a unit square:} In this numerical example, the computational domain is the unit square $\Omega = (0,1)^2$ and $u_h \in \caS^{2,2}_{h}$, i.e., a piecewise quadratic function in both directions. The coefficient matrix is the identity matrix, i.e., $A=I$, and the exact solution is given by
\[ u = \sin(6\pi x) \sin(3\pi y).\]
The right-hand-side $f$ and the (homogeneous) boundary conditions $u_D$ are determined by the prescribed exact solution $u$.
\end{example}

\subsubsection{Post-processing of \texorpdfstring{$u_{h}$}{}}
\label{sec:PostProc}

It is possible to obtain good error \emph{indicators} by constructing a function $y$ by some post-processing of the discrete solution $u_h$, see \cite{Lazarov2009_952,Repin08_book} and the references therein. Since $u_{h} \in C^{p-1}$, we have $\nabla u_{h} \in (C^{p-2})^{2} \subset H(\Omega,\dvg)$ for $p \ge 2$. Choosing $y = \nabla u_{h}$ will thus result in
\begin{align}
\Vert \nabla  u - \nabla u_h \Vert \leq C_{\Omega} \Vert \Delta u_{h} + f \Vert.
\label{eq:yh_graduh}
\end{align}
Once we have calculated $\eta_Q := \Vert \Delta u_{h} + f \Vert_{Q}$ for each cell $Q$ of the mesh, we can compare the local errors and choose a criterion for selecting cells which will be marked for further refinement. Typically, one chooses a threshold $\Theta$ and marks all cells $Q$ for refinement, where the local error is above this threshold. There are several possibilities for determining $\Theta$, e.g., the bulk-criterion proposed in \cite{Doerfler96_1106}. For simplicity, we choose a percentage $\psi$ and mark a cell $Q$ for refinement, if
\begin{equation}
\eta_Q > \Theta ,\text{~where~} \Theta = (100-\psi)\text{-percentile~of~}\{\eta_Q\}_Q.
\label{e_0228a}
\end{equation}
The $\alpha$-percentile of a set $\caA=\{a_1,\ldots,a_\nu\}$ denotes the value $\bar{a}$ below which $\alpha$ percent of all values $a_i$ fall. For example, if we choose $\psi = 20\%$ in \eqref{e_0228a}, then $\Theta$ is chosen such that $n_Q>\Theta$ holds for 20\% of all cells $Q$.
\setlength{\picw}{3cm}
\begin{figure}[!ht]\centering
\subfigure[$16\times 16$]{\includegraphics[height=\picw,width=\picw]{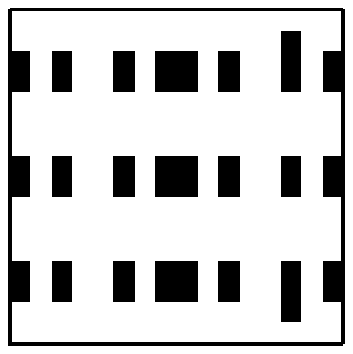}} \qquad
\subfigure[$32\times 32$]{\includegraphics[height=\picw,width=\picw]{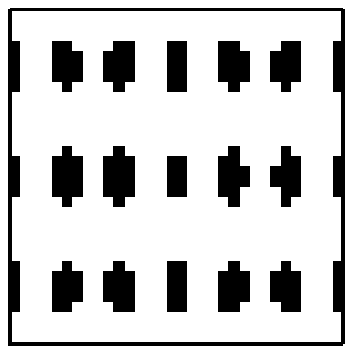}} \qquad
\subfigure[$64\times 64$]{\includegraphics[height=\picw,width=\picw]{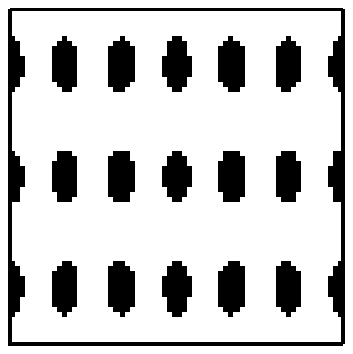}} \qquad
\subfigure[$128\times 128$]{\includegraphics[height=\picw,width=\picw]{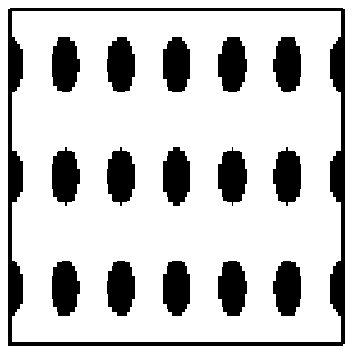}}
\caption{Cells marked by exact error with $\psi = 20\%$ in Example~\ref{ex:sin_6pix_3piy}, $\hat{V}_{h} = \caS^{2,2}_{h}$.
\label{fig_nexSU_Ex}}
\end{figure}
\setlength{\picw}{3cm}
\begin{figure}[!ht]\centering
\subfigure[$16\times 16$]{\includegraphics[height=\picw,width=\picw]{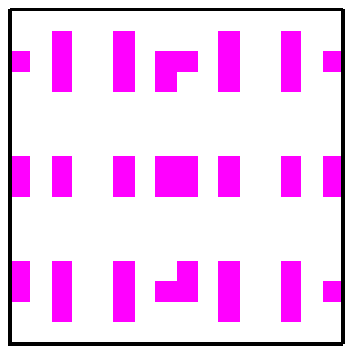}} \qquad
\subfigure[$32\times 32$]{\includegraphics[height=\picw,width=\picw]{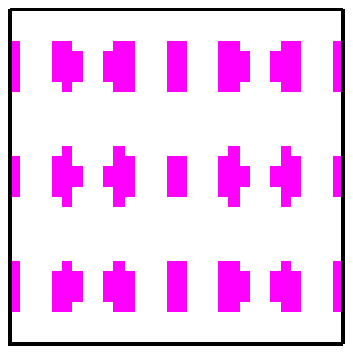}} \qquad
\subfigure[$64\times 64$]{\includegraphics[height=\picw,width=\picw]{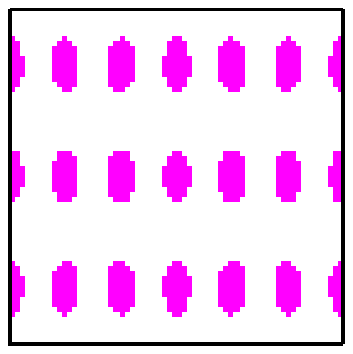}} \qquad
\subfigure[$128\times 128$]{\includegraphics[height=\picw,width=\picw]{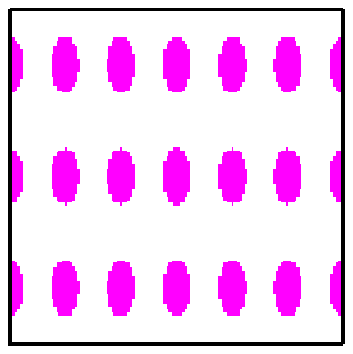}}
\caption{Cells marked by error estimator with $\psi = 20\%$ in Example~\ref{ex:sin_6pix_3piy}, $\hat{V}_{h} = \caS^{2,2}_{h}, y_{h} = \nabla u_{h}$.
\label{fig_nexSU_graduh}}
\end{figure}
\begin{figure}[!ht]\centering
\includegraphics[scale=.5]{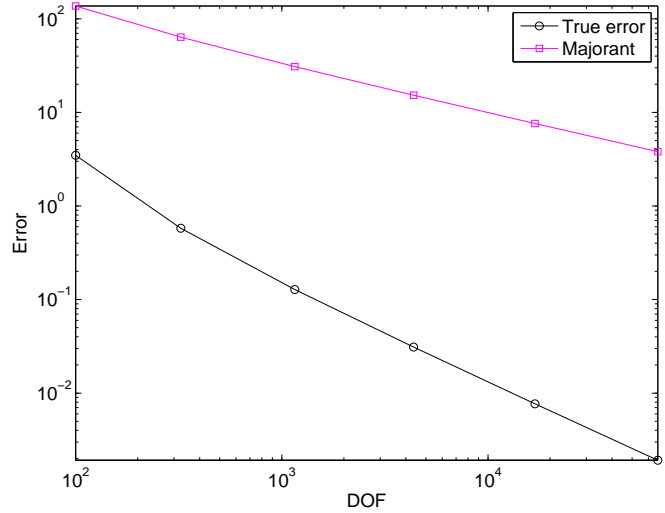}
\caption{Convergence of exact error and the majorant \eqref{eq:yh_graduh} for Example~\ref{ex:sin_6pix_3piy}.\label{fig_Conv_EstErr}}
\end{figure}
To show the efficiency of the estimator \eqref{eq:yh_graduh}, in Figure~\ref{fig_nexSU_Ex}, we present the cells marked for refinement by the exact error. The cells marked for refinement by the majorant given in \eqref{eq:yh_graduh} are presented in Figure~\ref{fig_nexSU_graduh}.
We see that starting from the mesh $32 \times 32$, the majorant is able to nicely capture the refinement pattern of exact error. However, from a closer look at the convergence of the exact error and the majorant, see Figure~\ref{fig_Conv_EstErr}, we find that though such an estimate is a guaranteed upper bound and very cheap to compute, it over-estimates the exact error, and its convergence is slower than the exact error (due to a lack of proper scaling, different operators acting on $u_{h}$ on both sides).
\footnote{We also studied a patch-wise interpolation approach. Unfortunately, this approach is neither a cheap one (to compute $y$) nor does it result in desired efficiency indices in the proximity of $1$, and therefore, we do not present its results.}
\subsubsection{Global minimization}
\label{sec:Minimization}

In order to obtain a sharp estimate (and not just an indicator), therefore, one has to find a function $y$ which minimizes the right-hand-side of \eqref{e_DefMoplusRaw}. 
For minimizing the estimate \eqref{e_DefMoplusRaw} numerically, we first rewrite the estimate in the following form
\begin{equation}
\| \nabla  u - \nabla u_h \|_A^2 
\leq 
(1+\beta) \|  A \nabla u_h - y \|_{\bar{A}}^2 + (1+\tfrac{1}{\beta}) C_\Omega^2 \| \dvg y + f \|^2
\ =:\ M_\oplus^2(y,\beta),
\label{e_DefMoplus}
\end{equation}
where $\beta > 0$ is a free parameter \cite{Lazarov2009_952,Repin08_book}. Note that the upper bound in \eqref{e_DefMoplus} holds true for \emph{any} fixed $y\in H(\Omega,\dvg)$ and  $\beta>0$.
Hereinafter, for simplicity, we will refer to $M_\oplus^2(y,\beta)$ as the \emph{majorant}. Introducing 
\begin{equation}
\begin{array}{r@{\ =\ }l@{\qquad}r@{\ =\ }l}
a_1 & 1+\beta, &
a_2 & (1+\tfrac{1}{\beta})C_\Omega^2,\\
B_1 & \displaystyle \| A\nabla u_h - y \|_{\bar{A}}^2, &
B_2 & \displaystyle \| \dvg y + f\|^2,
\end{array}\label{e_defaB}
\end{equation}
we can briefly write the majorant as
\begin{equation}
M_\oplus^2(y,\beta) = a_1 B_1 + a_2 B_2.
\end{equation}
The \emph{efficiency index}, defined by
\begin{align}
I_{\text{eff}} = \frac{M_\oplus(y,\beta)}{\| \nabla u - \nabla u_h \|_A},
\label{eq:Def_Ieff}
\end{align}
indicates how close the calculated majorant is to the exact error. The closer $I_{\text{eff}}$ is to 1, the better the estimate.
Therefore, obtaining a \emph{sharp} estimate requires to find $y\in H(\Omega,\dvg)$ and $\beta >0$ as solutions to the global minimization problem 
\begin{equation}
\min_{y\in H(\Omega,\dvg),\ \beta >0} M_\oplus^2(y,\beta).
\end{equation}
The technique for finding such minimizing parameters $y$ and $\beta$ will be discussed 
in Sections~\ref{sec_minmaj} and~\ref{sec_NumRealIgA}.
Before proceeding further, we give the following Lemma~\ref{lem_Moconv}, which can be found in \cite[Prop.\ 3.10]{Repin08_book}. 
It provides an analytical result on the sharpness of the bound $M_\oplus^2(y,\beta)$. For later reference, we also sketch the proof.
\begin{definition}
A sequence of finite-dimensional subspaces $\{ Y_j \}_{j=1}^\infty$ 
of a Banach-space $Y$ is called \emph{limit dense in $Y$}, 
if for any $\varepsilon >0$ and any $v\in Y$, 
there exists an index $j_\varepsilon$, such 
that $\inf_{p_k\in Y_k} \|p_k - v \|_{Y} < \varepsilon$ for all $k>j_\varepsilon$ .
\end{definition}

\begin{lemma}\label{lem_Moconv}
Let the spaces $\{Y_j\}_{j = 1}^\infty$ be limit dense in $H(\Omega,\dvg)$.
Then
\[
\lim_{j\to\infty}\ \inf_{y_j\in Y_j, \beta > 0}\ M_\oplus^2(y_j,\beta) = \| \nabla u - \nabla u_h \|_A^2.
\]
\end{lemma}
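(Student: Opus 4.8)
The plan is to show that the infimum of the majorant over the finite-dimensional spaces $Y_j$ converges to the squared energy-norm error. The essential observation, already noted after Theorem~\ref{thm_est1}, is that choosing $y = A\nabla u$ (the exact flux) makes both sides of \eqref{e_DefMoplusRaw} coincide: indeed $\|A\nabla u_h - A\nabla u\|_{\bar A}^2 = \|\nabla u_h - \nabla u\|_A^2$ since $A^{-1}A\nabla w \cdot A\nabla w = A\nabla w \cdot \nabla w$, and $\dvg(A\nabla u) + f = 0$ by the strong form \eqref{e_defpde}, so the residual term vanishes. Thus $y^* := A\nabla u \in H(\Omega,\dvg)$ is an \emph{exact minimizer} realizing equality in \eqref{e_DefMoplus} (in the limit $\beta \to 0$, since $B_2 = 0$ there and the first coefficient $a_1 \to 1$).

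First I would establish the lower bound. For every $j$ and every $y_j \in Y_j$, $\beta > 0$, the estimate \eqref{e_DefMoplus} gives $M_\oplus^2(y_j,\beta) \geq \|\nabla u - \nabla u_h\|_A^2$; taking the infimum and then the limit preserves this, so $\liminf_{j\to\infty} \inf_{y_j,\beta} M_\oplus^2(y_j,\beta) \geq \|\nabla u - \nabla u_h\|_A^2$. This direction is immediate from the guaranteed-upper-bound property and requires no density.

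Next I would establish the matching upper bound using the limit-density hypothesis. Fix $\varepsilon > 0$. By limit density, there is an index $j_\varepsilon$ and, for each $k > j_\varepsilon$, an element $p_k \in Y_k$ with $\|p_k - y^*\|_{H(\Omega,\dvg)} < \varepsilon$, so both $\|p_k - y^*\|$ and $\|\dvg(p_k - y^*)\|$ are controlled by $\varepsilon$. I would then feed $y = p_k$ into $M_\oplus^2$ and estimate each term. For $B_1 = \|A\nabla u_h - p_k\|_{\bar A}^2$, write $A\nabla u_h - p_k = (A\nabla u_h - y^*) + (y^* - p_k)$ and use the triangle inequality in the $\bar A$-norm together with the norm equivalence \eqref{e_normaabar}, so that $B_1 \le (\|\nabla u - \nabla u_h\|_A + C\varepsilon)^2$ for a constant $C$ from \eqref{e_Abounds}. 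For $B_2 = \|\dvg p_k + f\|^2 = \|\dvg p_k - \dvg y^*\|^2 \le \varepsilon^2$, since $\dvg y^* = -f$. Then I would choose $\beta = \beta(\varepsilon)$ tending to $0$ slowly—say $\beta = \varepsilon$—so that $a_1 = 1 + \varepsilon \to 1$ controls the $B_1$ term while the blow-up factor $a_2 = (1 + 1/\varepsilon)C_\Omega^2$ is absorbed by the quadratically small $B_2 \le \varepsilon^2$, giving $a_2 B_2 \le (1+1/\varepsilon)C_\Omega^2 \varepsilon^2 \to 0$. Combining, $\inf_{y_k \in Y_k,\beta} M_\oplus^2 \le (1+\varepsilon)(\|\nabla u - \nabla u_h\|_A + C\varepsilon)^2 + C_\Omega^2(\varepsilon + \varepsilon^2)$, whose $\limsup$ as $\varepsilon \to 0$ is $\|\nabla u - \nabla u_h\|_A^2$.

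The main obstacle is the joint tuning of $\beta$ against the approximation error $\varepsilon$: one must verify that $\beta$ can be sent to zero at a rate for which $a_1 B_1$ stays close to the target while $a_2 B_2$ remains negligible despite $a_2 \to \infty$. The quadratic smallness $B_2 = O(\varepsilon^2)$ against the linear blow-up $a_2 = O(1/\varepsilon)$ is exactly what makes the choice $\beta \sim \varepsilon$ work, and this balance is the only delicate point; everything else is routine triangle-inequality and norm-equivalence bookkeeping.
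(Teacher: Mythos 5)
Your proposal is correct and follows essentially the same route as the paper's proof: you approximate the exact flux $y^* = A\nabla u$ in the $H(\Omega,\dvg)$-norm using limit density, plug in $\beta = \varepsilon$, bound $B_1$ by the triangle inequality together with $\| A v \|_{\bar{A}} = \| v \|_A$ and norm equivalence, bound $B_2 \le \varepsilon^2$ via $f = -\dvg A\nabla u$, and balance $a_2 = \caO(1/\varepsilon)$ against $B_2 = \caO(\varepsilon^2)$ exactly as in \eqref{e_0212c}--\eqref{e_0212b}. Your only addition is to state the lower bound explicitly via \eqref{e_DefMoplus}, which the paper leaves implicit as a consequence of Theorem~\ref{thm_est1}; this is a harmless (indeed slightly more complete) presentation of the same argument.
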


\begin{proof}
Recall that the $H(\Omega,\dvg)$-norm $\|\cdot\|_{\dvg}$ is defined by $\|v\|_{\dvg}^2 = \|v\|^2 + \|\dvg v\|^2$.
Let $\varepsilon > 0 $ be arbitrarily small, but fixed. Let $j_\varepsilon$ be the index such that, for all $k > j_\varepsilon$, there exists a $p_k \in Y_k$ with $\|A \nabla u -p_k\|_{\dvg} < \varepsilon$.
Then,
\begin{equation}
\inf_{y_j\in Y_j, \beta > 0}\ M_\oplus^2(y_j,\beta)
\leq
\ M_\oplus^2(p_k,\varepsilon)
=
(1+\varepsilon) {\| A \nabla u_h - p_k \|_{\bar{A}}^2}
+ ( 1 + \tfrac{1}{\varepsilon}) C_\Omega^2 { \| f + \dvg p_k \|^2 }.
\label{e_0212c}
\end{equation}
Since $\| A v \|_{\bar{A}} = \| v\|_A$, we can write
\begin{eqnarray*}
\| A \nabla u_h - p_k \|_{\bar{A}}
&\leq &
\| A \nabla u_h - A \nabla u \|_{\bar{A}} + \| A \nabla u - p_k  \|_{\bar{A}} \\
&=&
\| \nabla u_h - \nabla u \|_{A} + \| A \nabla u - p_k  \|_{\bar{A}}.
\end{eqnarray*}
The norm $\| \cdot \|_{\bar{A}}$ is equivalent to the $L^2$-norm, so there exists a constant $c_A$, such that the second term in the right-hand side can be bounded by
\[
\| A \nabla u - p_k  \|_{\bar{A}}
\leq c_A \| A \nabla u - p_k  \|
\leq c_A \| A \nabla u - p_k  \|_{\dvg}
\leq c_A \varepsilon.
\]
Hence, we obtain the following estimate for the first term in \eqref{e_0212c}:
\begin{equation}
{\| A \nabla u_h - p_k \|_{\bar{A}}} \leq
\| \nabla u - \nabla u_h \|_A + \caO(\varepsilon).
\label{e_0212a}
\end{equation}
Since $f = -\dvg A \nabla u$, we can bound the second term in \eqref{e_0212c} as follows:
\begin{equation}
\| \dvg p_k + f \|
= 
\| \dvg p_k - \dvg A \nabla u \|
\leq 
\| p_k - A \nabla u \|_{\dvg} \ 
\leq \ \varepsilon.
\label{e_0212b}
\end{equation}
With \eqref{e_0212a} and \eqref{e_0212b}, we can rewrite \eqref{e_0212c} as
\begin{eqnarray*}
M_\oplus^2(p_k,\varepsilon)
\leq (1+\varepsilon) ( \| \nabla u - \nabla u_h \|_A^2 + \caO(\varepsilon) ) 
+ (1+\tfrac{1}{\varepsilon}) C_\Omega^2 \varepsilon^2
= \| \nabla u - \nabla u_h \|_A^2 + \caO(\varepsilon).
\end{eqnarray*}
Hence, the bound $M_\oplus^2(p_k,\varepsilon) \rightarrow \| \nabla u - \nabla u_h \|_A^2 $ as $\varepsilon \rightarrow 0$.
\end{proof}

\subsection{Steps Involved in Minimizing \texorpdfstring{$M_\oplus^2(y,\beta)$}{}}
\label{sec_minmaj}

As mentioned above, we need to find parameters $y$ and $\beta$ which minimize the majorant.
To do this, we apply an interleaved iteration process in which we alternately fix one of the variables and minimize with respect to the other. This process, which we summarize in the following, has been described, e.g., in \cite{Kraus11_1175,Lazarov2009_952}.

\begin{description}
\item[Step 1] Minimization with respect to $y$:
Assume that $\beta >0$ is given and fixed, either by an initial guess or as a result of Step 2 below. We view the majorant $M_\oplus^2(y)$ as a quadratic function of $y$ and calculate its Gateaux-derivative $M_\oplus^2(y)'$ with respect to $y$ in direction $\tilde{y}$. Setting $M_\oplus^2(y)' = 0$, we obtain
\begin{eqnarray}
a_1 \int_\Omega A^{-1} y \cdot \tilde{y} \dx + a_2 \int_\Omega \dvg y\ \dvg \tilde{y} \dx
&=&
a_1 \int_\Omega \nabla u_h \cdot \tilde{y} \dx - a_2 \int_\Omega f\ \dvg \tilde{y} \dx,
\label{e_minwrty}
\end{eqnarray}
where $a_1 = 1+\beta$ and $a_2 = (1+\tfrac{1}{\beta})C_\Omega^2$, as defined in \eqref{e_defaB}.
In order to solve \eqref{e_minwrty}, we choose a finite-dimensional subspace $Y_h\subset H(\Omega,\dvg)$ and search for a solution $y_h \in Y_h$. Testing in all directions $\tilde{y}\in Y_h$ leads to a linear system of equations which we write as
\begin{eqnarray}
\underline{L}_h \underline{y}_h = \underline{r}_h.
\label{e_Lhyhrh}
\end{eqnarray}
Here, $\underline{L}_h$ and $\underline{r}_h$ are the matrix and the vector induced by the left hand side and the right hand side of equation \eqref{e_minwrty}, respectively.
By solving \eqref{e_Lhyhrh}, we obtain the coefficient vector $\underline{y}_h$ for the discrete function $y_h$ minimizing $M_\oplus^2(y)$ in $Y_h\subset H(\Omega,\dvg)$.
Note that this process requires non-negligible cost as we need to assemble $\underline{L}_h$ and $\underline{r}_h$ and solve the system \eqref{e_Lhyhrh}.

\item[Step 2] Minimization with respect to $\beta$:
Assume that $y_h$ is given from Step 1. By direct calculation, we see that $M_\oplus^2(\beta)$ is minimized with respect to $\beta$ by setting
\begin{eqnarray}
\beta &=&  C_{\Omega} \sqrt{\frac{B_2}{B_1}},
\label{e_minwrtb}
\end{eqnarray}
where $B_1$ and $B_2$ are as defined in \eqref{e_defaB}.
Note that the evaluation of $B_1$ and $B_2$ (and thus $\beta$) requires only the evaluation of integrals, and thus involves negligible cost.
\end{description}

Steps~1 and 2 are repeated iteratively. 
We will refer to one loop of applying Step~1 and Step~2 as one \emph{interleaved iteration}.
Once we have computed minimizers $y_h$ and $\beta$, the computation of the majorant $M_\oplus^2(y_h,\beta)$ is straight-forward as it requires only the evaluation of the integrals.

Note that the matrix $\underline{L}_h$ can be written as
\begin{equation}
\underline{L}_h = a_1 \underline{L}_h^1 + a_2 \underline{L}_h^2,
\label{e_0319a}
\end{equation}
where $\underline{L}_h^1$ and $\underline{L}_h^2$ correspond to the terms $\int_\Omega A^{-1} y \cdot \tilde{y} \dx$ and $\int_\Omega \dvg y\ \dvg \tilde{y} \dx$ in \eqref{e_minwrty}, respectively.
Since the matrices $\underline{L}_h^1$ and $\underline{L}_h^2$ in \eqref{e_0319a} do not change in the interleaved iteration process, they need to be assembled only once. Analogously to \eqref{e_0319a}, we can write $\underline{r}_h$ as
\begin{align}
\underline{r}_h = a_1 \underline{r}_h^1 - a_2 \underline{r}_h^2,
\label{eq:vec_rh}
\end{align}
where $\underline{r}_h^1$ and $\underline{r}_h^2$ correspond to the terms $\int_\Omega \nabla u_h \cdot \tilde{y}_h \dx$ and $\int_\Omega f \ \dvg \tilde{y} \dx$ in \eqref{e_minwrty}, respectively.
The terms $\underline{r}_h^1$ and $\underline{r}_h^2$ also need to be assembled only once since they also do not change in the interleaved iteration process.
The full matrix $\underline{L}_h$ and vector $\underline{r}_h$, however, do change in each iteration, because of the change in $\beta$ and $y_h$. Based on past numerical studies, see, e.g., \cite{Kraus11_1175,Lazarov2009_952}, and the results presented in Sections~\ref{sec_effcomp} and \ref{sec_numex}, it has been found that for linear problems,
one or two such interleaved iterations are enough for obtaining a sufficiently accurate result.

To recapitulate, we summarize the steps for computing the majorant in Algorithm~\ref{algo:Mplus}.
\begin{algorithm}[!ht]
\caption{Computation of the majorant $M_\oplus$}
\label{algo:Mplus}
\begin{algorithmic}
\Require $u_h$, $f$, $C_\Omega$, $Y_h$
\Ensure $M_\oplus$
\State $\beta:=$ initial guess
\State Assemble and store $\underline{L}_h^1$, $\underline{L}_h^2$, $\underline{r}_h^1$, $\underline{r}_h^2$
\While{convergence is not achieved or maximum number of interleaved iterations is not reached}
\State $\underline{L}_h := (1+\beta) \underline{L}_h^1 + (1+\tfrac{1}{\beta}) C_\Omega^2 	\underline{L}_h^2$
\State $\underline{r}_h := (1+\beta) \underline{r}_h^1 - (1+\tfrac{1}{\beta}) C_\Omega^2 \underline{r}_h^2$
\State Solve $\underline{L}_h \underline{y}_h = \underline{r}_h$ for $\underline{y}_h$
\State $B_1 := \| A \nabla u_h - y_h \|^2_{\bar{A}}$
\State $B_2 := \| \dvg y_h + f \|^2$
\State $\beta := C_\Omega \sqrt{ B_2 / B_1 }$
\EndWhile
\State $M_\oplus(y,\beta) := \sqrt{(1+\beta)B_1 + (1+\tfrac{1}{\beta})C_\Omega^2 B_2}$
\end{algorithmic}
\end{algorithm}

\begin{remark}\label{rem_yglobal}
Note that the space $H(\Omega,\dvg)$, where the auxiliary quantity $y$ is sought, is a global space, and for a general complicated problem, it is not immediately clear how to locally compute $y$ without global effect. That being said, a local version of our estimator can be devised for specific problems and data (like equilibration of flux approach), however, that will restrict its generality, which is not very appealing to us. Therefore, in the remainder of the paper, we will focus on computing the majorant from the global minimization problem.
\end{remark}

\subsection{Quality Indicator and Local Error Indicator}
\label{sec_qualcrit}

So far, we have defined the majorant and discussed how we minimize (numerically) the majorant over $Y_h$.
Another important question, especially in the light of adaptive, local refinement, is whether a calculated majorant does correctly capture the error distribution.
From the proof of Lemma~\ref{lem_Moconv}, we recall the following observation:
\begin{equation}
a_1 B_1 \to \| \nabla u - \nabla u_h \|_A^2
\text{~and~}
a_2 B_2 \to 0,
\text{~as~} y_h \in H(\Omega,\dvg) \to A\nabla u.
\label{e_1stCompConv}
\end{equation}
From this, we deduce the following quality indicator.

\begin{proposition}
The distribution of the exact error is captured correctly, if 
\begin{equation}
a_1 B_1 > C_\oplus \ a_2 B_2 \label{e_reli}
\end{equation}
with some constant $C_\oplus > 1$.
\end{proposition}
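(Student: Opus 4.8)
The plan is to read \eqref{e_reli} as a \emph{dominance condition}: it asserts that the reliable, error-tracking part of the majorant outweighs the part that carries no information on the error distribution. Accordingly, I would first split both ingredients of the majorant cell-wise. Since $B_1 = \|A\nabla u_h - y\|_{\bar{A}}^2$ and $B_2 = \|\dvg y + f\|^2$ are integrals over $\Omega$, they decompose as $B_1 = \sum_Q B_{1,Q}$ and $B_2 = \sum_Q B_{2,Q}$, with $B_{1,Q} = \|A\nabla u_h - y\|_{\bar{A},Q}^2$ and $B_{2,Q} = \|\dvg y + f\|_Q^2$, while $a_1,a_2$ from \eqref{e_defaB} are global scalars. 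This yields $M_\oplus^2 = \sum_Q (a_1 B_{1,Q} + a_2 B_{2,Q})$ and identifies $a_1 B_{1,Q}$ as the natural local error indicator.

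The key steps are then as follows. First, I would localize the limit argument already carried out in the proof of Lemma~\ref{lem_Moconv}. Restricting the identities $\|A v\|_{\bar{A}} = \|v\|_A$ and $f = -\dvg A\nabla u$ to a single cell $Q$ shows that, as $y \to A\nabla u$, one has $a_1 B_{1,Q} \to \|\nabla u - \nabla u_h\|_{A,Q}^2 =: e_Q^2$ and $a_2 B_{2,Q} \to 0$; this is the cell-wise refinement of observation \eqref{e_1stCompConv}, and it is what makes $a_1 B_{1,Q}$ a faithful proxy for the local error in the sharp limit. Second, I would use \eqref{e_reli} to control how close the computed situation is to this limit: the trivial bound $a_1 B_1 \le M_\oplus^2$ together with the dominance $a_2 B_2 < a_1 B_1 / C_\oplus$ gives
\begin{equation*}
a_1 B_1 \ \le\ M_\oplus^2 \ <\ \Big(1 + \tfrac{1}{C_\oplus}\Big)\, a_1 B_1 ,
\end{equation*}
so with $C_\oplus > 1$ the majorant is pinned, up to a controlled multiplicative factor, to its first component $a_1 B_1$. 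Since that component is the one whose cell-wise parts converge to the exact local errors, the computed distribution $\{a_1 B_{1,Q}\}_Q$ then reflects the exact distribution $\{e_Q^2\}_Q$, which is the assertion to be proved.

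The main obstacle is that \emph{capturing the distribution correctly} is intrinsically a statement about the relative sizes and ordering of the \emph{local} quantities, whereas \eqref{e_reli} is a single \emph{global} inequality. The contamination term $a_2 B_2$ does localize, but the cell-wise distribution of the dual residual $\dvg y + f$ is in general uncorrelated with that of the true error, so the global dominance does not by itself bound each local ratio $a_2 B_{2,Q}/(a_1 B_{1,Q})$: a cell where $B_{1,Q}$ happens to be small could still be polluted by a comparatively large $B_{2,Q}$. Hence I expect the honest conclusion to be that \eqref{e_reli} is a \emph{sufficient practical criterion}, justified by placing the computation in the near-sharp regime where all local contaminations are simultaneously small, rather than a cell-wise guarantee. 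Turning it into a rigorous per-cell statement would require additional local control on $B_2$ (a local efficiency estimate for the residual), and supplying that is the genuinely hard part.
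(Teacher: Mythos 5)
Your proposal is correct and takes essentially the same route as the paper: the paper likewise deduces the criterion from the convergence observation \eqref{e_1stCompConv} (extracted from the proof of Lemma~\ref{lem_Moconv}) together with the pinning bound $a_1 B_1 \le M_\oplus^2 < \bigl(1+\tfrac{1}{C_\oplus}\bigr) a_1 B_1$ made explicit in Remark~\ref{rem_Cplus}, and gives no rigorous per-cell proof, validating the choice $C_\oplus \ge 5$ purely by numerical experiment. Your closing caveat --- that \eqref{e_reli} is a global, practical sufficiency criterion whose cell-wise version would need additional local control of $B_2$ --- is precisely the status the paper itself assigns to this proposition.
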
 
\noindent This criterion is easy to check, since the terms appearing in \eqref{e_reli} are evaluated in the process of minimizing $M_\oplus^2(y,\beta)$. It was found from extensive numerical studies (see examples presented in Sections~\ref{sec_effcomp} and \ref{sec_numex}) that an accurate distribution of the error is obtained for $C_\oplus \ge 5$.

\begin{remark}\label{rem_Cplus}
For the choice of $C_{\oplus} \ge 5$, we have $a_2 B_2 < a_1 B_1 /5$, and therefore, $\Vert \nabla u - \nabla u_h \Vert_A \le \sqrt{1.2~ a_1 B_1}$. One can see from all the tables in Sections~\ref{sec_effcomp} and \ref{sec_numex}, that whenever this criterion is satisfied, we have $I_{\text{eff}} \le 1.2$ (the ratio of $\sqrt{a_1 B_1}/\Vert \nabla u - \nabla u_h \Vert_A$ appears to be of the same magnitude as $\sqrt{1 + 1/C_{\oplus}}$. Note that this criterion does not require $a_2 B_2$ to be close to zero, but just less than $a_1 B_1 /5$. Since these approximations (of the original problem and the auxiliary problem in $H(\Omega, \dvg)$) are monotonically convergent, the approximation at any level will only improve at the next refinement level, and this is why the results get better for any further refinement. Clearly, all the terms are fully computable, and thus, usable in an algorithm.
\end{remark}

{We define the local error indicator $\eta_Q$ on a cell $Q$ as the restriction of the first component of the majorant to the cell $Q$, i.e., by
\begin{align}
\eta_Q^2(y_h) = \int_Q (\nabla u_h - A^{-1}y_h ) (A\nabla u_h - y_h )\dx.
\label{e_errest}
\end{align}
The factor $(1+\beta)$ is omitted, since this scalar factor is the same for all cells of the domain. As remarked in the observation \eqref{e_1stCompConv}, the first component will converge to the exact error, thus providing a good indicator for the error distribution. A more detailed discussion of this indicator can be found in \cite[Sec.~3.6.4]{Repin08_book}.}
For refinement based on $\eta_{Q}$, we again use the criterion \eqref{e_0228a}.
\section{Efficiency and Computational Cost of the Proposed Estimator in the Isogeometric Context}
\label{sec_effcomp}

We now discuss the efficiency and the computational cost of the proposed estimator based on the global minimization steps presented in Section~\ref{sec_minmaj}. Through out this Section, we again consider Example~\ref{ex:sin_6pix_3piy} from Section~\ref{sec:Majorant}.
All the computations for this example and the examples presented in Section~\ref{sec_numex} are performed in {MATLAB}$^\circledR$ on an HP workstation Z420 with Intel Xeon CPU E5-1650, 3.2 GHz, 12 Cores and 16 GB RAM, and the linear systems \eqref{e_Khuhfh} and \eqref{e_Lhyhrh} are solved using the in-built direct solver.
The right-hand-side $f$ and the boundary conditions $u_D$ are determined by the prescribed exact solution $u$.
We study the efficiency of the majorant based on \emph{straight forward} computational procedure, as discussed in Section~\ref{sec_proc0}, and based on \emph{cost-efficient} procedure, as discussed in Section~\ref{sec_NumRealIgA}, which coarsens the mesh and increases the polynomial degree simultaneously. This alternative cost-efficient procedure will then be used in Section~\ref{sec_numex} for further numerical examples. In all the numerical results of Example~\ref{ex:sin_6pix_3piy} in this Section, the initial guess for $\beta$ is $0.01$.
In the tables, we indicate the mesh-size by the number of interior knot spans of the knot vectors $s$ and $t$, respectively. By this, we mean the number of knot spans without counting the vanishing knot spans at the beginning and the end of the open knot vectors. For example, if 
\begin{eqnarray*} 
s&=&(0,0,0,0.25,0.5,0.75,1,1,1)\\
t&=&{(0,0,0,0,0.5,0.5,1,1,1,1)},
\end{eqnarray*}
then the mesh-size is $4 \times {3}${, since the empty knot span $(0.5,0.5)$ in $t$ is also counted as an interior knot span}.
We compare the timings for assembling and for solving the linear systems \eqref{e_Khuhfh} and \eqref{e_Lhyhrh}, as well as the total time for assembling and solving. In the presented tables, these timings are shown in the columns labeled assembling-time, solving-time, and sum, respectively. The label \emph{pde} indicates that the column corresponds to solving the partial differential equation \eqref{e_Khuhfh}, i.e., to assembling $\underline{K}_h$ and solving \eqref{e_Khuhfh} for $\underline{u}_h$. The label \emph{est} indicates that the timings correspond to the estimator, i.e, assembling $\underline{L}_h$ and solving \eqref{e_Lhyhrh} for~ $\underline{y}_h$. {In the column labeled $\frac{\emph{est}}{\emph{pde}}$, we present the ratio of these timings. Note that these ratios were computed \emph{before} rounding the numbers, i.e., taking the ratios of the reported numbers may result in slightly different values.}
{The computed efficiency indices $I_{\text{eff}}$ (see \eqref{eq:Def_Ieff}) are presented in tables.} In order to check the quality criterion discussed in Section~\ref{sec_qualcrit}, we present the values of $a_1B_1$ and $a_2 B_2$ and see whether the inequality \eqref{e_reli} is fulfilled or not.
To indicate the quality of the error distribution captured by the majorant, we plot which cells are marked for refinement based on the exact local error and the criterion \eqref{e_0228a} {(plotted in black)}, and
compare this to the refinement marking based on the criterion \eqref{e_0228a} applied to the computed error estimate {(plotted in magenta)}.

\subsection{Straightforward Procedure}\label{sec_proc0}

\addtocounter{case}{-1}
\begin{case}\label{ex:case_0}
\textbf{(Straightforward Procedure)} For the first choice for $\hat{Y}_h$, we use the same mesh as for $\hat{V}_h$, and choose 
\begin{equation}
\hat{Y}_h = \caS^{p+1,p}_{h} \otimes \caS^{p,p+1}_{h}.
\label{e_Yh0}
\end{equation}
The function space $Y_h$ is then defined by the well known Piola transformation \cite{BoffiBF-13}.
\end{case}

We consider the same setting as presented in Example~\ref{ex:sin_6pix_3piy} in Section~\ref{sec:Majorant}. In Table~\ref{nexSUeff_0}, we present the computed efficiency indices obtained with this choice of $Y_h$, which show that upper bound approaches $1$ (representing exact error) as the mesh is refined. The dashed line in Table~\ref{nexSUeff_0} indicates that the criterion \eqref{e_reli} is fulfilled with $C_\oplus = 5$ (actually $4.94$) starting from the mesh $64 \times 64$.
The cells marked by the error estimator are shown in Figure~\ref{fig_nexSU_0}. When comparing these plots {to those presented in Figure~\ref{fig_nexSU_Ex}}, we see that the error distribution is captured accurately starting from the mesh $32\times 32$.
%

%
The timings presented in Table~\ref{nexSUtime_0}, however, show that the computation of the error estimate is costlier (about $4.5$ times) than assembling and solving the original problem. This is not surprising, since, when $N_u$ denotes the number of degrees of freedom (DOF) of $u_h$, the number of DOF of $y_h$, which is vector-valued, is asymptotically $2N_u$. This results in higher assembly time and the solution time for the linear system (where a direct solver is used). Clearly, this straightforward approach is not cost-efficient. In the next section, therefore, we discuss some cost-efficient approaches for computing $y_{h}$.
\setlength{\tabcolsep}{3pt}
\begin{table}[!ht]\begin{center}
\begin{tabular}{|c|r|rr|}\hline
mesh-size & \multicolumn{1}{|c|}{$I_{\text{eff}}$}& \multicolumn{1}{|c}{$a_1 B_1$}
& \multicolumn{1}{c|}{$a_2 B_2$}
\\
\hline
\hline
$8 \times 8$  & { 3.43 } & 2.62e+01 & 1.17e+02 \\
$16 \times 16$  & { 1.92 } & 6.07e-01 & 6.19e-01 \\
$32 \times 32$  & { 1.41 } & 2.29e-02 & 9.71e-03 \\ \hdashline
$64 \times 64$  & { 1.20 } & 1.15e-03 & 2.33e-04 \\
$128 \times 128$  & { 1.10 } & 6.51e-05 & 6.54e-06 \\
$256 \times 256$  & { 1.05 } & 3.87e-06 & 1.95e-07 \\
$512 \times 512$  & { 1.03 } & 2.36e-07 & 5.94e-09 \\
\hline
\end{tabular}
\caption{Efficiency index and components of the majorant in Example~\ref{ex:sin_6pix_3piy}, Case~\ref{ex:case_0}, $\hat{V}_{h} = \caS^{2,2}_{h}, \hat{Y}_h = \caS^{3,2}_{h} \otimes \caS^{2,3}_{h}$.\label{nexSUeff_0}}
\end{center}\end{table}

\setlength{\picw}{3cm}
\begin{figure}[!ht]\centering
\subfigure[$16\times 16$]{\includegraphics[height=\picw,width=\picw]{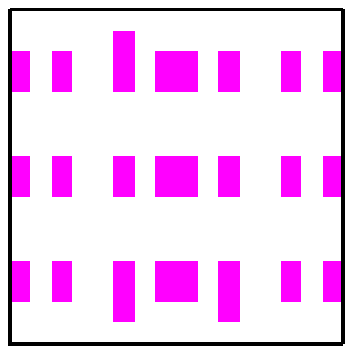}} \qquad
\subfigure[$32\times 32$]{\includegraphics[height=\picw,width=\picw]{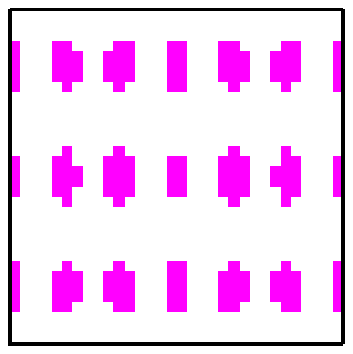}} \qquad
\subfigure[$64\times 64$]{\includegraphics[height=\picw,width=\picw]{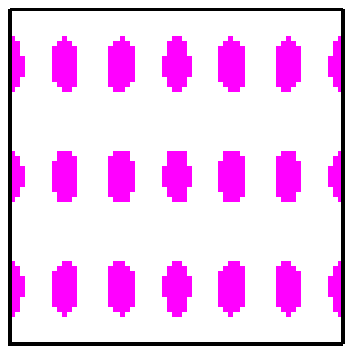}} \qquad
\subfigure[$128\times 128$]{\includegraphics[height=\picw,width=\picw]{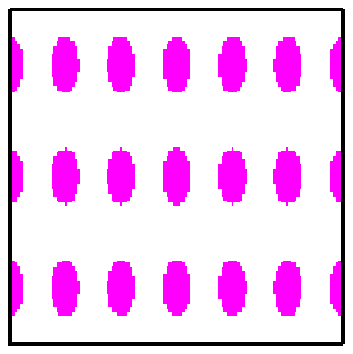}}
\caption{Cells marked by error estimator with $\psi = 20\%$ in Example~\ref{ex:sin_6pix_3piy}, Case~\ref{ex:case_0}, $\hat{V}_{h} = \caS^{2,2}_{h}, \hat{Y}_h = \caS^{3,2}_{h} \otimes \caS^{2,3}_{h}$.\label{fig_nexSU_0}}
\end{figure}

\begin{table}[!ht]\begin{center}
\begin{tabular}{|c|rr|rrr|rrr|rrr|c}\hline
mesh-size & 
\multicolumn{2}{|c|}{$\#$DOF} & 
\multicolumn{3}{|c|}{assembling-time} & 
\multicolumn{3}{|c|}{solving-time} & 
\multicolumn{3}{|c|}{sum} 
\\
 ~ & $u_h$ & $y_h$ 
& \multicolumn{1}{|c}{\emph{pde}}
& \multicolumn{1}{c}{\emph{est}}
& \multicolumn{1}{c|}{$\frac{\emph{est}}{\emph{pde}}$}
& \multicolumn{1}{|c}{\emph{pde}}
& \multicolumn{1}{c}{\emph{est}} 
& \multicolumn{1}{c|}{$\frac{\emph{est}}{\emph{pde}}$} 
& \multicolumn{1}{|c}{\emph{pde}} 
& \multicolumn{1}{c}{\emph{est}} 
& \multicolumn{1}{c|}{$\frac{\emph{est}}{\emph{pde}}$} 
\\\hline 
$8 \times 8$ & 100 & 220 & 0.04 & 0.17 & { 4.39 } & $<$0.01 & $<$0.01 & { 5.16 } & 0.04 & 0.17 & { 4.40 } \\
$16 \times 16$ & 324 & 684 & 0.14 & 0.59 & { 4.25 } & $<$0.01 & 0.01 & { 5.39 } & 0.14 & 0.60 & { 4.26 } \\
$32 \times 32$ & 1156 & 2380 & 0.46 & 2.17 & { 4.70 } & 0.01 & 0.03 & { 4.71 } & 0.47 & 2.20 & { 4.70 } \\
$64 \times 64$ & 4356 & 8844 & 1.82 & 8.51 & { 4.68 } & 0.03 & 0.20 & { 6.15 } & 1.85 & 8.70 & { 4.70 } \\
$128 \times 128$ & 16900 & 34060 & 7.38 & 34.19 & { 4.63 } & 0.15 & 0.87 & { 5.70 } & 7.54 & 35.06 & { 4.65 } \\
$256 \times 256$ & 66564 & 133644 & 33.30 & 149.78 & { 4.50 } & 0.84 & 5.66 & { 6.78 } & 34.14 & 155.44 & { 4.55 } \\
$512 \times 512$ & 264196 & 529420 & 191.11 & 766.10 & { 4.01 } & 3.77 & 33.92 & { 9.00 } & 194.88 & 800.03 & { 4.11 } \\
\hline\end{tabular}
\caption{Number of DOF and timings in Example~\ref{ex:sin_6pix_3piy}, Case~\ref{ex:case_0}, $\hat{V}_{h} = \caS^{2,2}_{h}, \hat{Y}_h = \caS^{3,2}_{h} \otimes \caS^{2,3}_{h}$. \label{nexSUtime_0}}
\end{center}\end{table}
\subsection{Alternative Cost-Efficient Procedure}
\label{sec_NumRealIgA}

Recall that the cost of Step~1 of the algorithm presented in Section~\ref{sec_minmaj} depends on the choice of $Y_h\subset H(\Omega,\dvg)$. As shown in Lemma~\ref{lem_Moconv}, we can make the estimate as sharp as we desire by choosing a suitably large space $Y_h$.
However, the larger $Y_h$ is chosen, the more costly setting up and solving the system \eqref{e_Lhyhrh} becomes. Clearly, it is highly desirable to keep the cost for error estimation below the cost for solving the original problem. 

As discussed above, choosing $\hat{Y}_h$ as in \eqref{e_Yh0} does not result in a cost-efficient method. Apart from the fact that $y_h$ is vector-valued while $u_h$ is scalar, another aspect contributes to the high cost for the procedure presented in Section~\ref{sec_proc0}.
Recall that, by choosing $\hat{Y}_h$ as in \eqref{e_Yh0}, we have
\begin{align*}
y_1 & \in \caS^{p+1,p}_{h},\\
y_2 & \in \caS^{p,p+1}_{h},
\end{align*}
i.e., the components of $y_h$ are in different spline spaces. Hence, we have to compute different basis functions for $y_1$ and $y_2$ (note that this can be a costly procedure for higher polynomial degrees). Furthermore, when assembling, for example, the matrix $\underline{L}_h^1$, we need to compute integrals over products of basis functions of the form
\begin{equation*}
\int_\Omega R_i R_j \dx.
\end{equation*}
With $\hat{Y}_h$ as  in \eqref{e_Yh0}, the product $R_i R_j$ of basis functions of $y_1$ is different to the product of basis functions of $y_2$, hence, the integrals have to be evaluated independently for $y_1$ and $y_2$.
\begin{case}\label{ex:case_1}
In the light of these observations, and since $(C^{p-2})^d \subset H(\Omega, \dvg),\ \forall p \geq 2$, we study the following alternative choice for~$\hat{Y}_h$.
\begin{equation}
\hat{Y}_h = \caS^{p+1,p+1}_{h} \otimes \caS^{p+1,p+1}_{h}.
\label{e_Yh1}
\end{equation}
Thereby, we choose a function space $\hat{Y}_h$ on the parameter domain and, analogously to the relation of $\hat{V}_h$ and $V_h$, we define the function space $Y_h$ by the push-forward
\[ Y_h = \hat{Y}_h \circ G^{-1}.\]
\end{case}
We refer to this setting as \emph{Case~\ref{ex:case_1}} in the remainder of the paper. With this choice, $y_1$ and $y_2$ are contained in the same spline spaces. Hence, the basis functions need to be computed only once, and any computed function values can be used for both components of $y_h$.
The computed efficiency indices are presented in Table~\ref{nexSUeff_1}, which show that we obtain even better (i.e., sharper) upper bounds for the exact error with $\hat{Y}_h$ as in \eqref{e_Yh1} than with the choice \eqref{e_Yh0}. When we compare the plots of the cells marked by the error estimator in Figure~\ref{fig_nexSU_1} to the plots in Figure~\ref{fig_nexSU_Ex}, we see that the error distribution is again captured accurately starting from the mesh $32\times 32$. The dashed line in Table~\ref{nexSUeff_1} indicates that the criterion \eqref{e_reli} is fulfilled with $C_\oplus = 5$ starting from the mesh $64\times 64$.
%

%
The timings obtained with this method are presented in Table~\ref{nexSUtime_1}. This approach reduced the total time needed for computing the majorant from a factor of about $4.5$ to a factor of approximately $3$ compared to the time for assembling and solving the original problem. Nevertheless, a factor of $3$ in the timings is still not very appealing, and demands further reduction in the cost.
\begin{table}[!ht]\begin{center}
\begin{tabular}{|c|r|rr|}\hline
 mesh-size & \multicolumn{1}{|c|}{$I_{\text{eff}}$}& \multicolumn{1}{|c}{$a_1 B_1$}
& \multicolumn{1}{c|}{$a_2 B_2$}
\\
\hline
\hline
 $8 \times 8$  & { 2.77 } & 8.08e+01 & 1.24e+01 \\
 $16 \times 16$  & { 1.71 } & 5.75e-01 & 3.96e-01 \\
 $32 \times 32$  & { 1.32 } & 2.14e-02 & 7.05e-03 \\ \hdashline 
 $64 \times 64$  & { 1.16 } & 1.11e-03 & 1.78e-04 \\
 $128 \times 128$  & { 1.08 } & 6.39e-05 & 5.08e-06 \\
 $256 \times 256$  & { 1.04 } & 3.83e-06 & 1.53e-07 \\
 $512 \times 512$  & { 1.02 } & 2.35e-07 & 4.69e-09 \\
\hline
\end{tabular}
\caption{Efficiency index and components of the majorant in Example~\ref{ex:sin_6pix_3piy}, Case~\ref{ex:case_1}, $\hat{V}_{h} = \caS^{2,2}_{h}, \hat{Y}_h = \caS^{3,3}_{h} \otimes \caS^{3,3}_{h}$.\label{nexSUeff_1}}
\end{center}\end{table}

\begin{figure}[!ht]\centering
\subfigure[$16\times 16$]{\includegraphics[height=\picw,width=\picw]{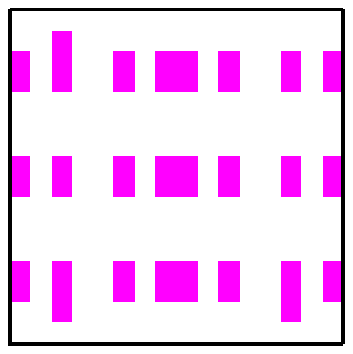}} \qquad
\subfigure[$32\times 32$\label{nexSU_32_0_1}]{\includegraphics[height=\picw,width=\picw]{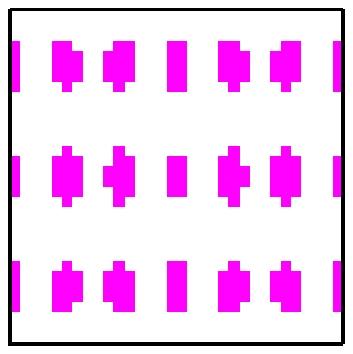}} \qquad
\subfigure[$64\times 64$]{\includegraphics[height=\picw,width=\picw]{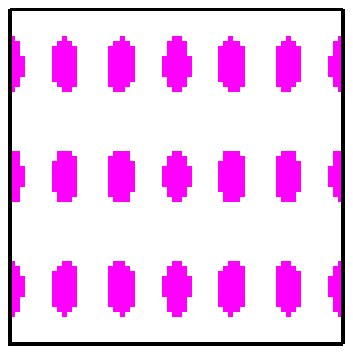}} \qquad
\subfigure[$128\times 128$]{\includegraphics[height=\picw,width=\picw]{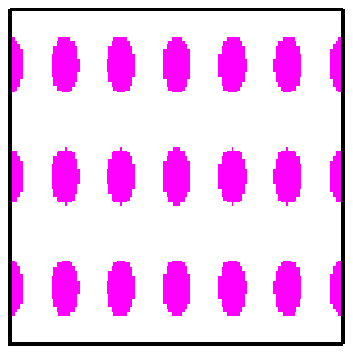}}
\caption{Cells marked by error estimator with $\psi = 20\%$ in Example~\ref{ex:sin_6pix_3piy}, Case~\ref{ex:case_1}, $\hat{V}_{h} = \caS^{2,2}_{h}, \hat{Y}_h = \caS^{3,3}_{h} \otimes \caS^{3,3}_{h}$.\label{fig_nexSU_1}}
\end{figure}

\begin{table}[!ht]\begin{center}
\begin{tabular}{|c|rr|rrr|rrr|rrr|}\hline
mesh-size & 
\multicolumn{2}{|c|}{$\#$DOF} & 
\multicolumn{3}{|c|}{assembling-time} & 
\multicolumn{3}{|c|}{solving-time} & 
\multicolumn{3}{|c|}{sum} 
\\
& $u_h$ & $y_h$ 
& \multicolumn{1}{|c}{\emph{pde}} 
& \multicolumn{1}{c}{\emph{est}} 
& \multicolumn{1}{c|}{$\frac{\emph{est}}{\emph{pde}}$} 
& \multicolumn{1}{|c}{\emph{pde}} 
& \multicolumn{1}{c}{\emph{est}} 
& \multicolumn{1}{c|}{$\frac{\emph{est}}{\emph{pde}}$} 
& \multicolumn{1}{|c}{\emph{pde}} 
& \multicolumn{1}{c}{\emph{est}} 
& \multicolumn{1}{c|}{$\frac{\emph{est}}{\emph{pde}}$} 
\\
\hline
\hline
 $8 \times 8$ & 100 & 242 & 0.04 & 0.11 & { 2.78 } & $<$0.01 & $<$0.01 & { 1.51 } & 0.04 & 0.11 & { 2.76 } \\
 $16 \times 16$ & 324 & 722 & 0.12 & 0.34 & { 2.86 } & $<$0.01 & 0.01 & { 5.33 } & 0.12 & 0.35 & { 2.90 } \\
 $32 \times 32$ & 1156 & 2450 & 0.46 & 1.35 & { 2.94 } & 0.01 & 0.05 & { 7.69 } & 0.47 & 1.40 & { 3.01 } \\
 $64 \times 64$ & 4356 & 8978 & 1.77 & 5.30 & { 2.99 } & 0.03 & 0.27 & { 8.02 } & 1.80 & 5.57 & { 3.09 } \\
 $128 \times 128$ & 16900 & 34322 & 7.39 & 21.89 & { 2.96 } & 0.16 & 1.45 & { 9.26 } & 7.55 & 23.34 & { 3.09 } \\
 $256 \times 256$ & 66564 & 134162 & 33.00 & 94.69 & { 2.87 } & 0.84 & 8.83 & { 10.54 } & 33.84 & 103.52 & { 3.06 } \\
 $512 \times 512$ & 264196 & 530450 & 191.59 & 498.20 & { 2.60 } & 3.83 & 61.45 & { 16.06 } & 195.42 & 559.65 & { 2.86 } \\
\hline\end{tabular}
\caption{Number of DOF and timings in Example~\ref{ex:sin_6pix_3piy}, Case~\ref{ex:case_1}, $\hat{V}_{h} = \caS^{2,2}_{h}, \hat{Y}_h = \caS^{3,3}_{h} \otimes \caS^{3,3}_{h}$.\label{nexSUtime_1}}
\end{center}\end{table}
\begin{remark}\label{rem_C0FEM}
Note that the use of equal degree polynomials for both the components of $\hat{Y}_{h}$ is only possible because of extra continuity readily available from NURBS basis functions. A counter-part is not possible in FEM case simply because the derivatives of FEM basis functions (with $C^{0}$-continuity) is only in $L^{2}$, and hence, one can not avoid using proper subspaces of $H(\Omega,\dvg)$, e.g., Raviart-Thomas space (with unequal degree polynomials in both the dimensions for both the components). It is further important to note from a close inspection of Tables~\ref{nexSUeff_0} and \ref{nexSUeff_1} that the results from equal degree components of vector-valued quantity outperformed the results from unequal degree case.
\end{remark}

In order to further reduce the computational cost, we reduce the number of DOFs of $y_h$ by coarsening the mesh by a factor $K$ in each dimension. The number of DOFs of $y_h$ is thus reduced to $2 N_u/K^2$ (asymptotically). The larger $K$ is chosen, the greater the reduction of DOFs will be. At the same time, if the coarsening is done too aggressively, sharp features might not be detected properly on coarse meshes. We counter the reduction in accuracy due to mesh-coarsening by increasing the polynomial degree of $y_h$ by some positive integer $k$, i.e., we choose
\begin{equation}
\hat{Y}_h = \caS^{p+k,p+k}_{Kh} \otimes \caS^{p+k,p+k}_{Kh}.
\label{e_Yhk}
\end{equation}
Note that, if desired, one could also choose different factors $K_1$ and $K_2$ and different degree increases $k_1$ and $k_2$ for the first and second component, respectively.
\begin{remark}\label{rem_choiceYh}
With the choices of $\hat{Y}_h$ as in \eqref{e_Yhk}, we take advantage of the following specific property of univariate NURBS basis functions. For $C^{p-1}$-continuity, increasing the polynomial degree by $k$ only adds a total of $k$ additional basis functions. In other words, the global smoothness can be increased at the cost of only a few additional DOFs. Coarsening the mesh by a factor $K$, however, will also reduce the number of DOFs by the same factor $K$ (asymptotically).

Moreover, as we will see from the three cases of Example~\ref{ex:sin_6pix_3piy}, asymptotically we get better efficiency indices with higher degree $p$ and coarser meshes as compared to lower degree $p$ and finer meshes. This phenomenon is similar to the $p$ finite element discretization for problems with smooth solutions, where increasing the polynomial degree for a fixed mesh size $h$ is much more advantageous than decreasing the mesh size $h$ for a fixed (low) polynomial degree. Nevertheless, such a low cost construction for higher degree $p$ is not possible in FEM discretizations.
\end{remark}
Note that Case~\ref{ex:case_1} discussed above fits into this framework, since Case~\ref{ex:case_1} corresponds to the choice $K = k = 1$.
\begin{case}\label{ex:case_2}
For the next setting, we apply moderate mesh-coarsening by choosing
\begin{equation*}
K = k = 2\text{\ (i.e.,\ }\hat{Y}_h = \caS^{p+2,p+2}_{2h} \otimes \caS^{p+2,p+2}_{2h}).
\end{equation*}
\end{case}
Similar to \emph{Case~\ref{ex:case_1}}, the function space $Y_h$ is defined by the push-forward
\[ Y_h = \hat{Y}_h \circ G^{-1}.\]
This setting will be referred to as \emph{Case~\ref{ex:case_2}} in the remainder of the paper. The computed efficiency indices along with the magnitudes of the terms $a_1 B_1$ and $a_2 B_2$ for Case~\ref{ex:case_2} are presented in Table~\ref{nexSUeff_2}, and the marked cells are plotted in Figure~\ref{fig_nexSU_2}. The dashed line indicates that criterion \eqref{e_reli} is fulfilled with $C_\oplus = 5$, and that a good upper bound of the error is computed and the correct error distribution is captured on meshes starting from $64\times 64$. On coarse meshes, however, the efficiency index is larger than in Case~\ref{ex:case_1}, which is due to the boundary effects.
%
%
The timings presented in Table~\ref{nexSUtime_2} show that, even though Case~\ref{ex:case_2} is faster than Case~\ref{ex:case_1}, this approach still costs roughly as much as solving the original problem. This is due to the costlier evaluation of the higher degree basis functions, as well as the increased support and overlap of the basis functions, which results in more non-zero entries in $\underline{L}_h$ than in $\underline{K}_h$.
\begin{table}[!ht]\begin{center}
\begin{tabular}{|c|r|rr|}\hline
 mesh-size & \multicolumn{1}{|c|}{$I_{\text{eff}}$}& \multicolumn{1}{|c}{$a_1 B_1$}
& \multicolumn{1}{c|}{$a_2 B_2$}
\\
\hline
\hline
 $8 \times 8$  & { 14.19 } & 1.59e+03 & 8.53e+02 \\ 
 $16 \times 16$  & { 8.49 } & 1.97e+01 & 4.32e+00 \\
 $32 \times 32$  & { 1.82 } & 3.05e-02 & 2.41e-02 \\ \hdashline
 $64 \times 64$  & { 1.16 } & 1.12e-03 & 1.76e-04 \\ 
 $128 \times 128$  & { 1.04 } & 6.14e-05 & 2.24e-06 \\ 
 $256 \times 256$  & { 1.01 } & 3.72e-06 & 3.32e-08 \\ 
 $512 \times 512$  & { 1.00 } & 2.31e-07 & 5.13e-10 \\ 
\hline
\end{tabular}
\caption{Efficiency index and components of the majorant in Example~\ref{ex:sin_6pix_3piy}, Case~\ref{ex:case_2}, $\hat{V}_{h} = \caS^{2,2}_{h}, \hat{Y}_h = \caS^{4,4}_{2h} \otimes \caS^{4,4}_{2h}$.\label{nexSUeff_2}}
\end{center}\end{table}

\setlength{\picw}{3cm}
\begin{figure}[!ht]\centering
\subfigure[$16\times 16$]{\includegraphics[height=\picw,width=\picw]{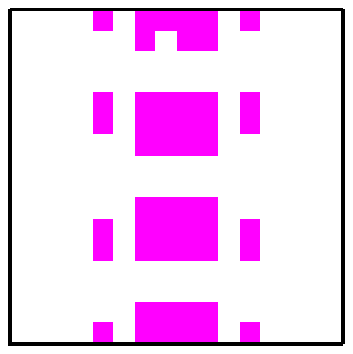}} \qquad
\subfigure[$32\times 32$\label{nexSU_32_1}]{\includegraphics[height=\picw,width=\picw]{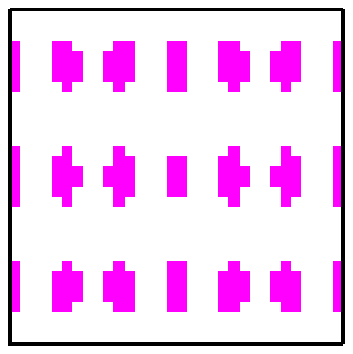}} \qquad
\subfigure[$64\times 64$]{\includegraphics[height=\picw,width=\picw]{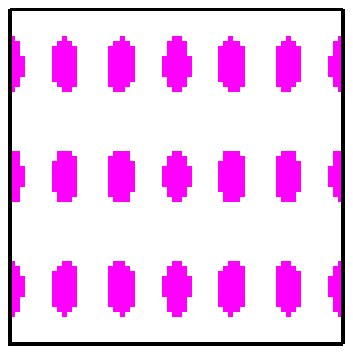}} \qquad
\subfigure[$128\times 128$]{\includegraphics[height=\picw,width=\picw]{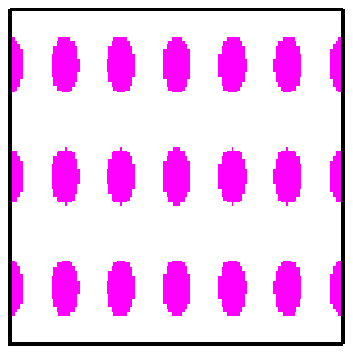}} 
\caption{Cells marked by error estimator with $\psi = 20\%$ in Example~\ref{ex:sin_6pix_3piy}, Case~\ref{ex:case_2}, $\hat{V}_{h} = \caS^{2,2}_{h}, \hat{Y}_h = \caS^{4,4}_{2h} \otimes \caS^{4,4}_{2h}$.\label{fig_nexSU_2}}
\end{figure}

\begin{table}[!ht]\begin{center}
\begin{tabular}{|c|rr|rrr|rrr|rrr|}\hline
 mesh-size & 
\multicolumn{2}{|c|}{$\#$DOF} & 
\multicolumn{3}{|c|}{assembling-time} & 
\multicolumn{3}{|c|}{solving-time} & 
\multicolumn{3}{|c|}{sum} 
\\
 & $u_h$ & $y_h$ 
& \multicolumn{1}{|c}{\emph{pde}} 
& \multicolumn{1}{c}{\emph{est}} 
& \multicolumn{1}{c|}{$\frac{\emph{est}}{\emph{pde}}$} 
& \multicolumn{1}{|c}{\emph{pde}} 
& \multicolumn{1}{c}{\emph{est}} 
& \multicolumn{1}{c|}{$\frac{\emph{est}}{\emph{pde}}$} 
& \multicolumn{1}{|c}{\emph{pde}} 
& \multicolumn{1}{c}{\emph{est}} 
& \multicolumn{1}{c|}{$\frac{\emph{est}}{\emph{pde}}$} 
\\
\hline
\hline
 $8 \times 8$ & 100 & 128 & 0.03 & 0.05 & { 1.39 } & $<$0.01 & $<$0.01 & { 1.16 } & 0.04 & 0.05 & { 1.39 } \\ 
 $16 \times 16$ & 324 & 288 & 0.14 & 0.18 & { 1.29 } & $<$0.01 & $<$0.01 & { 0.92 } & 0.14 & 0.18 & { 1.28 } \\ 
 $32 \times 32$ & 1156 & 800 & 0.54 & 0.59 & { 1.10 } & 0.01 & 0.02 & { 2.32 } & 0.55 & 0.61 & { 1.11 } \\ 
 $64 \times 64$ & 4356 & 2592 & 1.91 & 2.33 & { 1.22 } & 0.04 & 0.08 & { 2.09 } & 1.95 & 2.40 & { 1.23 } \\ 
 $128 \times 128$ & 16900 & 9248 & 7.46 & 9.54 & { 1.28 } & 0.19 & 0.51 & { 2.75 } & 7.64 & 10.05 & { 1.32 } \\ 
 $256 \times 256$ & 66564 & 34848 & 33.93 & 39.02 & { 1.15 } & 0.90 & 2.59 & { 2.88 } & 34.82 & 41.60 & { 1.19 } \\ 
 $512 \times 512$ & 264196 & 135200 & 196.23 & 177.98 & { 0.91 } & 4.08 & 15.91 & { 3.90 } & 200.31 & 193.89 & { 0.97 } \\ 
\hline\end{tabular}
\caption{Number of DOF and timings in Example~\ref{ex:sin_6pix_3piy}, Case~\ref{ex:case_2}, $\hat{V}_{h} = \caS^{2,2}_{h}, \hat{Y}_h = \caS^{4,4}_{2h} \otimes \caS^{4,4}_{2h}$.\label{nexSUtime_2}}
\end{center}\end{table}
\begin{case}\label{ex:case_3}
To further improve the timings, we coarsen the mesh more aggressively by a factor of $4$ and, at the same time, increase the polynomial degree of $y_h$ by 4, as compared to $u_h$, i.e., 
\begin{equation*}
K = k = 4 \text{\ (i.e.,\ }\hat{Y}_h = \caS^{p+4,p+4}_{4h} \otimes \caS^{p+4,p+4}_{4h}).
\end{equation*}
\end{case}
Again, similar to \emph{Case~\ref{ex:case_1}}, the function space $Y_h$ is defined by the push-forward
\[ Y_h = \hat{Y}_h \circ G^{-1}.\]
We refer to this setting as \emph{Case~\ref{ex:case_3}} in the remainder of the paper. This aggressive coarsening notably affects the efficiency index on coarse meshes, see Table~\ref{nexSUeff_3}. On fine meshes, however, the efficiency indices are close to 1 in all presented cases. The number of DOFs of $y_h$ in Case~\ref{ex:case_3} is only $N_u/8$ (asymptotically).
%
The timings presented in Table~\ref{nexSUtime_3} show that this setting results in a method which can be performed significantly faster (at almost half of the cost) than solving the original problem.
The more aggressive reduction of DOF outweighs the additional costs mentioned above, even though the polynomial degree is now increased by 4.
\begin{remark}\label{rmk_Coplus}
In all Cases for Example~\ref{ex:sin_6pix_3piy}, criterion \eqref{e_reli} is fulfilled with $C_\oplus = 5$ on meshes of size $64 \times 64$ and finer. This is indicated by the dashed lines in Tables~\ref{nexSUeff_0}, \ref{nexSUeff_1}, \ref{nexSUeff_2} and \ref{nexSUeff_3}, and is clear from Figures~\ref{fig_nexSU_0}-\ref{fig_nexSU_3}. Therefore, Example~\ref{ex:sin_6pix_3piy} and the examples discussed in Section~\ref{sec_numex} show that $C_\oplus = 5$ is a good choice for checking criterion \eqref{e_reli} numerically, even though this choice may be conservative in some cases.
\end{remark}
\begin{table}[!ht]\begin{center}
\begin{tabular}{|c|r|rr|}\hline
 mesh-size & \multicolumn{1}{|c|}{$I_{\text{eff}}$}& \multicolumn{1}{|c}{$a_1 B_1$}
& \multicolumn{1}{c|}{$a_2 B_2$}
\\
\hline
\hline
 $8 \times 8$  & { 11.28 } & 5.38e+02 & 1.01e+03 \\ 
 $16 \times 16$  & { 36.43 } & 2.83e+02 & 1.60e+02 \\ 
 $32 \times 32$  & { 12.63 } & 2.04e+00 & 5.81e-01 \\\hdashline 
 $64 \times 64$  & { 1.17 } & 1.13e-03 & 1.88e-04 \\ 
 $128 \times 128$  & { 1.01 } & 5.98e-05 & 3.79e-07 \\ 
 $256 \times 256$  & { 1.00 } & 3.70e-06 & 1.24e-09 \\ 
 $512 \times 512$  & { 1.00 } & 2.31e-07 & 5.32e-12 \\ 
\hline
\end{tabular}
\caption{Efficiency index and components of the majorant in Example~\ref{ex:sin_6pix_3piy}, Case~\ref{ex:case_3}, $\hat{V}_{h} = \caS^{2,2}_{h}, \hat{Y}_h = \caS^{6,6}_{4h} \otimes \caS^{6,6}_{4h}$.\label{nexSUeff_3}}
\end{center}\end{table}

\begin{figure}[!ht]\centering
\subfigure[$16\times 16$]{\includegraphics[height=\picw,width=\picw]{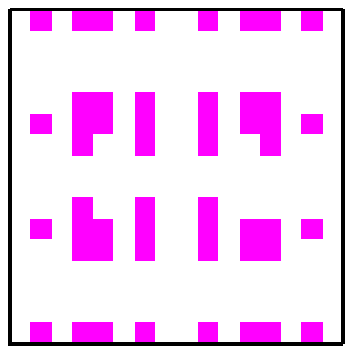}}\qquad
\subfigure[$32\times 32$\label{nexSU_32_2}]{\includegraphics[height=\picw,width=\picw]{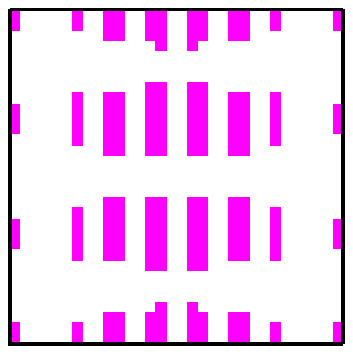}}\qquad
\subfigure[$64\times 64$]{\includegraphics[height=\picw,width=\picw]{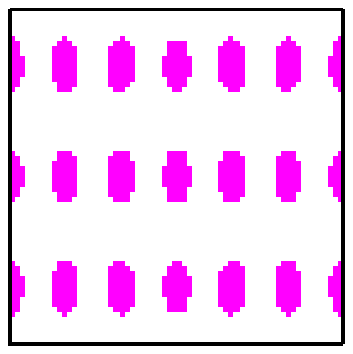}}\qquad
\subfigure[$128\times 128$]{\includegraphics[height=\picw,width=\picw]{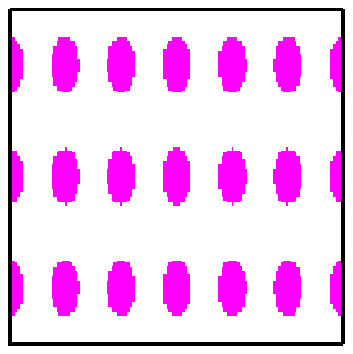}}
\caption{Cells marked by error estimator with $\psi = 20\%$ in Example~\ref{ex:sin_6pix_3piy}, Case~\ref{ex:case_3}, $\hat{V}_{h} = \caS^{2,2}_{h}, \hat{Y}_h = \caS^{6,6}_{4h} \otimes \caS^{6,6}_{4h}$.\label{fig_nexSU_3}}
\end{figure}

\begin{table}[!ht]\begin{center}
\begin{tabular}{|c|rr|rrr|rrr|rrr|}\hline
 mesh-size & 
\multicolumn{2}{|c|}{$\#$DOF} & 
\multicolumn{3}{|c|}{assembling-time} & 
\multicolumn{3}{|c|}{solving-time} & 
\multicolumn{3}{|c|}{sum} 
\\
& $u_h$ & $y_h$ 
& \multicolumn{1}{|c}{\emph{pde}} 
& \multicolumn{1}{c}{\emph{est}} 
& \multicolumn{1}{c|}{$\frac{\emph{est}}{\emph{pde}}$} 
& \multicolumn{1}{|c}{\emph{pde}} 
& \multicolumn{1}{c}{\emph{est}} 
& \multicolumn{1}{c|}{$\frac{\emph{est}}{\emph{pde}}$} 
& \multicolumn{1}{|c}{\emph{pde}} 
& \multicolumn{1}{c}{\emph{est}} 
& \multicolumn{1}{c|}{$\frac{\emph{est}}{\emph{pde}}$} 
\\
\hline
\hline
 $8 \times 8$ & 100 & 128 & 0.04 & 0.03 & { 0.76 } & $<$0.01 & $<$0.01 & { 1.09 } & 0.04 & 0.03 & { 0.76 } \\ 
 $16 \times 16$ & 324 & 200 & 0.14 & 0.10 & { 0.69 } & $<$0.01 & $<$0.01 & { 0.61 } & 0.14 & 0.10 & { 0.69 } \\ 
 $32 \times 32$ & 1156 & 392 & 0.54 & 0.31 & { 0.57 } & 0.01 & $<$0.01 & { 0.34 } & 0.55 & 0.31 & { 0.57 } \\ 
 $64 \times 64$ & 4356 & 968 & 1.90 & 1.19 & { 0.63 } & 0.04 & 0.01 & { 0.26 } & 1.94 & 1.20 & { 0.62 } \\ 
 $128 \times 128$ & 16900 & 2888 & 7.49 & 4.86 & { 0.65 } & 0.16 & 0.14 & { 0.84 } & 7.66 & 4.99 & { 0.65 } \\ 
 $256 \times 256$ & 66564 & 9800 & 33.90 & 20.15 & { 0.59 } & 0.91 & 0.82 & { 0.91 } & 34.81 & 20.98 & { 0.60 } \\ 
 $512 \times 512$ & 264196 & 35912 & 194.25 & 84.70 & { 0.44 } & 4.10 & 5.45 & { 1.33 } & 198.35 & 90.15 & { 0.45 } \\ 
\hline\end{tabular}
\caption{Number of DOF and timings in Example~\ref{ex:sin_6pix_3piy}, Case~\ref{ex:case_3}, $\hat{V}_{h} = \caS^{2,2}_{h}, \hat{Y}_h = \caS^{6,6}_{4h} \otimes \caS^{6,6}_{4h}$.\label{nexSUtime_3}}
\end{center}\end{table}
We now comment on the interleaved iterations. The results in the Tables~\ref{nexSUeff_0}-\ref{nexSUeff_3} were obtained by applying only two interleaved iterations, as described in Section~\ref{sec_minmaj}. As mentioned there, a sufficiently accurate result can be obtained already after the first such iteration. To illustrate this, we present the efficiency indices for Case~\ref{ex:case_3} in Table~\ref{nexSUbetaloops}, which were obtained after one, two, and four interleaved iterations, respectively. The efficiency index does vary notably on the coarser meshes, but since all of these values greatly overestimate the exact error, they do not correctly capture the error distribution. On meshes, where the criterion \eqref{e_reli} is fulfilled with $C_\oplus = 5$, and thus the error distribution is correctly recovered, the differences due to more interleaved iterations are insignificant.

\begin{table}[!ht]\begin{center}
\begin{tabular}{|c|rrr|}\hline
mesh-size & \multicolumn{3}{|c|}{interleaved iterations}\\
& 
\multicolumn{1}{|c}{1} & 
\multicolumn{1}{c}{2} &
\multicolumn{1}{c|}{4}  \\
\hline
\hline
 $8 \times 8$     & 11.84	& 11.28	& 11.25 \\
 $16 \times 16$   & 80.31	& 36.43	& 33.78 \\
 $32 \times 32$   & 17.36	& 12.63	& 10.11 \\\hdashline
 $64 \times 64$   & 1.20	& 1.17	& 1.17  \\
 $128 \times 128$ & 1.01	& 1.01 	& 1.01 \\
 $256 \times 256$ & 1.00	& 1.00	& 1.00 \\
 $512 \times 512$ & 1.00	& 1.00	& 1.00 \\
\hline
\end{tabular}\caption{Comparison of $I_\text{eff}$ for different numbers of interleaved iterations, Example~\ref{ex:sin_6pix_3piy}, Case~\ref{ex:case_3}, $\hat{V}_{h} = \caS^{2,2}_{h}, \hat{Y}_h = \caS^{6,6}_{4h} \otimes \caS^{6,6}_{4h}$.\label{nexSUbetaloops}}
\end{center}\end{table} 

\begin{remark}\label{rmk_balance}
The observations discussed above illustrate that one has to balance the sharpness of the majorant on the one hand, and the required computational effort on the one hand. Note that in typical practical applications, the exact solution (and thus the sharpness of the majorant) is not known. Therefore, to address the balance between sharpness and required computational effort, we propose the following strategy. If the mesh is coarse and the total computational cost for the error estimate is moderate, we apply no (or only moderate) coarsening. When the original mesh is fine (problem size being large), we coarsen the mesh aggressively, and thereby, profit from the fast computation of the estimate. While exercising this strategy it is important to enforce the criterion \eqref{e_reli} with $C_\oplus \ge 5$.
\end{remark}
\section{Numerical Examples}
\label{sec_numex}

In this section, we present further numerical examples which illustrate the potential of the proposed a posteriori error estimator. We will present the results corresponding to the three settings discussed in Section~\ref{sec_effcomp}, namely \emph{Case~\ref{ex:case_1}}, \emph{Case~\ref{ex:case_2}} and \emph{Case~\ref{ex:case_3}} with the choices $K = k = 1$, $K = k = 2$, and $K = k = 4$, respectively. As in Example~\ref{ex:sin_6pix_3piy}, the initial guess for $\beta$ is $0.01$.
As discussed in Section~\ref{sec_Bsplines}, the parameter domain in all presented examples is the unit square $\hat{\Omega} = (0,1)^2$. The mesh-sizes in the two coordinate directions, which will be presented in the tables, are determined by the respective initial meshes, which in turn, are determined by the geometry mappings.
The figures plotted in black represent the computations based on the exact error, and the figures plotted in magenta represent the computations based on the majorant. {The data presented in the tables is as described in the beginning of Section~\ref{sec_effcomp}.}

We first consider an example with reduced continuity $C^{p-m},\ m>1$.

\begin{example}\label{ex:sin_6pix_3piy_p4}
\textbf{Sinus function in a unit square with $p=q=4$ and $C^{1}$-continuity:}
We consider the same exact solution and the same physical domain as in Example~\ref{ex:sin_6pix_3piy}, i.e.,
\begin{align*}
u = \sin( 6 \pi x) \sin(3 \pi y), \quad \Omega = (0,1)^2.
\end{align*}
However, we now use B-splines of degree $p=q=4$ to represent $\Omega$, and we add a triple knot at the coordinates $x = 0.5$ and $y = 0.5$. The initial knot vectors are thus given by
\[
s = t = (0,0,0,0,0,0.5,0.5,0.5,1,1,1,1,1),
\]
and the geometry mapping is only $C^1$-continuous at the coordinate $0.5$.
\end{example}
\begin{table}[!ht]\begin{center}
\begin{tabular}{|c|r|rr|}\hline
 mesh-size & \multicolumn{1}{|c|}{$I_{\text{eff}}$}& \multicolumn{1}{|c}{$a_1 B_1$}
& \multicolumn{1}{c|}{$a_2 B_2$}\\
\hline
\multicolumn{4}{c}{Case~\ref{ex:case_1}}\\
\hline
$18 \times 18$  & { 1.84 } & 1.04e-03 & 9.00e-04   \\
$34 \times 34$  & { 1.40 } & 1.78e-06 & 7.23e-07   \\\hdashline
$66 \times 66$  & { 1.20 } & 5.09e-09 & 1.00e-09   \\
$130 \times 130$  & { 1.10 } & 1.77e-11 & 1.74e-12 \\
$258 \times 258$  & { 1.05 } & 6.61e-14 & 3.25e-15 \\
\hline
\multicolumn{4}{c}{Case~\ref{ex:case_2}}\\
\hline
$18 \times 18$  & { 15.43} & 7.95e-02 & 5.75e-02  \\
$34 \times 34$  & { 6.04 } & 1.14e-05 & 3.53e-05   \\
$66 \times 66$  & { 1.76 } & 7.52e-09 & 5.69e-09   \\\hdashline
$130 \times 130$  & { 1.16 } & 1.87e-11 & 3.01e-12 \\
$258 \times 258$  & { 1.04 } & 6.54e-14 & 2.49e-15 \\
\hline
\multicolumn{4}{c}{Case~\ref{ex:case_3}}\\
\hline
$18 \times 18$  & { 132.77 } & 7.38e+00 & 2.76e+00 \\
$34 \times 34$  & { 148.41 } & 1.86e-02 & 9.53e-03 \\
$66 \times 66$  & { 6.42 } & 5.49e-08 & 1.21e-07   \\\hdashline
$130 \times 130$  & { 1.13 } & 1.83e-11 & 2.39e-12 \\
$258 \times 258$  & { 1.01 } & 6.34e-14 & 3.78e-16 \\
\hline
\end{tabular}
\caption{Efficiency index and components of the majorant in Example~\ref{ex:sin_6pix_3piy_p4},  $\hat{V}_{h} = \caS^{4,4}_{h}$ with $C^{1}$-continuity.\label{nexSUp4_eff}}
\end{center}\end{table}

\setlength{\picw}{3cm}
\begin{figure}[!ht]
\centering
\subfigure[$18\times 18$.]{\includegraphics[height=\picw,width=\picw]{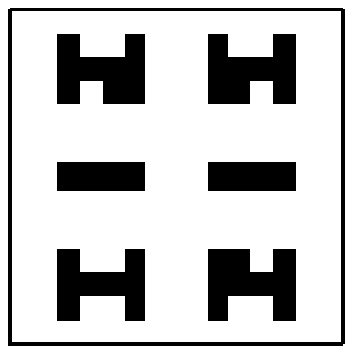}} \qquad
\subfigure[$34\times 34$.]{\includegraphics[height=\picw,width=\picw]{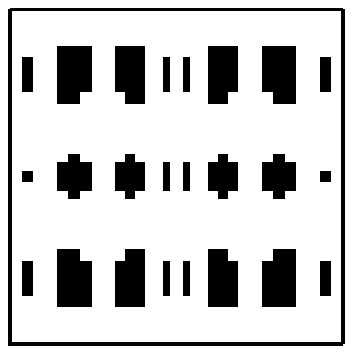}} \qquad
\subfigure[$66\times 66$.]{\includegraphics[height=\picw,width=\picw]{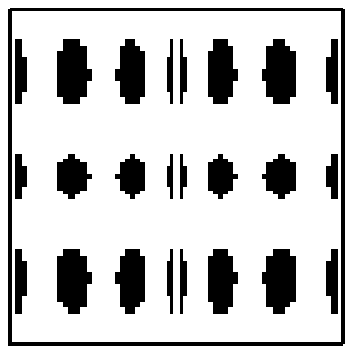}} \qquad
\subfigure[$130\times 130$.]{\includegraphics[height=\picw,width=\picw]{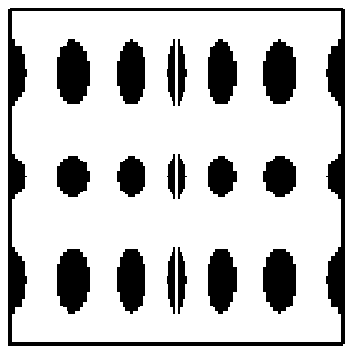}} \\
\caption{Cells marked by exact error with $\psi = 20\%$ in Example~\ref{ex:sin_6pix_3piy_p4},  $\hat{V}_{h} = \caS^{4,4}_{h}$ with $C^{1}$-continuity.\label{fig_nexSUp4_ex}}
\subfigure[$18\times 18$, Case~\ref{ex:case_1}.\label{fig_nexSUp4_est_a}]{\includegraphics[height=\picw,width=\picw]{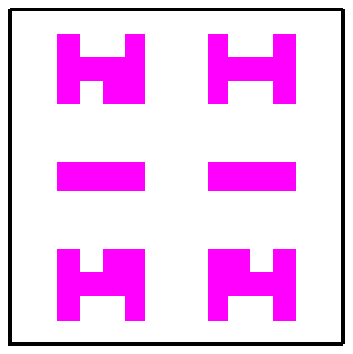}} \qquad
\subfigure[$34\times 34$, Case~\ref{ex:case_1}.\label{fig_nexSUp4_est_b}]{\includegraphics[height=\picw,width=\picw]{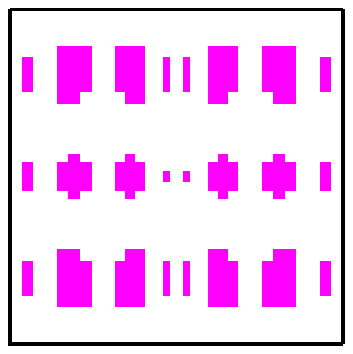}} \qquad
\subfigure[$66\times 66$, Case~\ref{ex:case_2}.\label{fig_nexSUp4_est_c}]{\includegraphics[height=\picw,width=\picw]{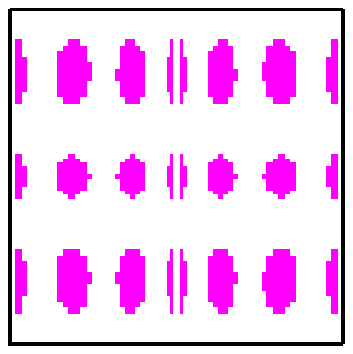}} \qquad
\subfigure[$130\times 130$, Case~\ref{ex:case_3}.\label{fig_nexSUp4_est_d}]{\includegraphics[height=\picw,width=\picw]{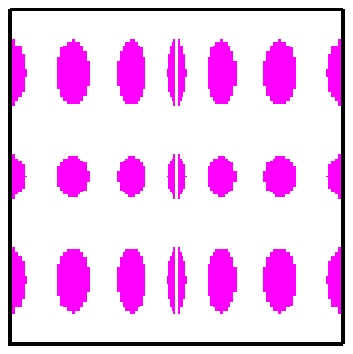}} \\
\caption{Cells marked by error estimator with $\psi = 20\%$ in Example~\ref{ex:sin_6pix_3piy_p4}, $\hat{V}_{h} = \caS^{4,4}_{h}$ with $C^{1}$-continuity.\label{fig_nexSUp4_est}}
\end{figure}
The computed efficiency indices are presented in Table~\ref{nexSUp4_eff}. The dashed lines, which correspond to criterion \eqref{e_reli} being fulfilled with $C_\oplus = 5$, again show that more aggressive mesh-coarsening requires a finer initial mesh. By this criterion, we get a good quality of the estimate and the indicated error distribution starting from the mesh $66\times 66$ in Case~\ref{ex:case_1}, and from $130 \times 130$ in Cases~\ref{ex:case_2} and~\ref{ex:case_3}.
We present the cells marked for refinement by the exact error in Figure~\ref{fig_nexSUp4_ex}, and the cells marked by the error estimator in Figure~\ref{fig_nexSUp4_est}. Figure~\ref{fig_nexSUp4_est_a} shows that the error distribution is already captured on the mesh $18\times 18$ in Case~\ref{ex:case_1}. In Case~\ref{ex:case_2}, we obtain a good indication of the error distribution from the mesh $66\times 66$, i.e., before criterion \eqref{e_reli} with $C_\oplus = 5$ is fulfilled.
Once the error distribution is captured correctly on a certain mesh, it is also captured on all finer meshes (as in Example~\ref{ex:sin_6pix_3piy}). Hence, we do not show all plots for all meshes and cases, but only the first meshes, on which the error distribution is captured correctly.
%
Also, we omit the presentation of the timings, since the overall behavior is as in Example~\ref{ex:sin_6pix_3piy}.
In the next example, we consider the case of non-trivial PDE coefficient matrix $A$.
\begin{example}\label{ex:sin_6pix_3piy_NTA}
Let the matrix $A$ be of the form of
\[
\left(
\begin{array}{cc}
e^{b_{11} x + b_{12} y} & 0 \\
0 & e^{b_{21} x + b_{22} y}
\end{array}
\right),
\]
which is positive definite for $b_{ij} \in \mathbb{R}^{+}$, $i, j = \{1, 2\}$. This will result in the PDE operator to be of the form of
\[
e^{b_{11} x + b_{12} y} \dfrac{\partial^{2}}{\partial x^{2}}
+ e^{b_{21} x + b_{22} y} \dfrac{\partial^{2}}{\partial y^{2}}
+ b_{11} e^{b_{11} x + b_{12} y} \dfrac{\partial}{\partial x}
+ b_{22} e^{b_{21} x + b_{22} y} \dfrac{\partial}{\partial y}.
\]
To have this PDE operator with full generality, we take $b_{11} = 0.1, b_{12} = 0.8, b_{21} = 0.4, b_{22} = 0.7$. With this generality, to have a good comparison of the efficiency indices with the examples considered so far, we again choose the exact solution to be $u = \sin (6 \pi x) \sin (3 \pi y)$. The right hand side function is accordingly calculated and the solution has homogeneous Dirichlet boundary values.
\end{example}
Note that in this case the constant $C_{\Omega}$ has to be accordingly modified. For the unit square domain, and the matrix $A$ given above, its value is taken as $\dfrac{c_{2}}{\pi \sqrt{2}}$, where
\[
c_{2} = \max \{e^{b_{11} x + b_{12} y}, e^{b_{21} x + b_{22} y}\}.
\]
\begin{table}[!ht]\begin{center}
\begin{tabular}{|c|r|rr|}\hline
mesh-size & \multicolumn{1}{|c|}{$I_{\text{eff}}$}& \multicolumn{1}{|c}{$a_1 B_1$}
& \multicolumn{1}{c|}{$a_2 B_2$}
\\
\hline
\multicolumn{4}{c}{Case~\ref{ex:case_1}}\\
\hline
 $8 \times 8$  & { 3.64 } & 2.02e+02 & 5.73e+01 \\ 
 $16 \times 16$  & { 6.00 } & 1.16e+01 & 7.75e+00 \\ 
 $32 \times 32$  & { 2.50 } & 6.82e-02 & 9.65e-02 \\
 $64 \times 64$  & { 1.74 } & 2.70e-03 & 1.98e-03 \\ 
 $128 \times 128$  & { 1.37 } & 1.31e-04 & 4.83e-05 \\ \hdashline 
 $256 \times 256$  & { 1.19 } & 7.04e-06 & 1.31e-06 \\ 
 $512 \times 512$  & { 1.09 } & 4.05e-07 & 3.77e-08 \\ 
\hline
\multicolumn{4}{c}{Case~\ref{ex:case_2}}\\
\hline
 $8 \times 8$  & { 38.29 } & 2.34e+04 & 5.36e+03 \\ 
 $16 \times 16$  & { 14.09 } & 9.07e+01 & 1.61e+01 \\ 
 $32 \times 32$  & { 4.64 } & 2.02e-01 & 3.67e-01 \\
 $64 \times 64$  & { 1.62 } & 2.53e-03 & 1.55e-03 \\ \hdashline 
 $128 \times 128$  & { 1.15 } & 1.10e-04 & 1.57e-05 \\ 
 $256 \times 256$  & { 1.04 } & 6.20e-06 & 2.16e-07 \\ 
 $512 \times 512$  & { 1.01 } & 3.77e-07 & 3.27e-09 \\ 
\hline
\multicolumn{4}{c}{Case~\ref{ex:case_3}}\\
\hline
 $8 \times 8$  & { 32.36 } & 2.81e+03 & 1.77e+04 \\ 
 $16 \times 16$  & { 122.17 } & 6.73e+03 & 1.30e+03 \\ 
 $32 \times 32$  & { 23.20 } & 1.15e+01 & 2.71e+00 \\
 $64 \times 64$  & { 1.64 } & 2.58e-03 & 1.61e-03 \\ \hdashline 
 $128 \times 128$  & { 1.03 } & 9.79e-05 & 2.41e-06 \\ 
 $256 \times 256$  & { 1.00 } & 5.95e-06 & 7.76e-09 \\ 
 $512 \times 512$  & \multicolumn{3}{c|}{out of memory} \\ 
\hline
\end{tabular}
\caption{Efficiency index and components of the majorant in Example~\ref{ex:sin_6pix_3piy_NTA}, $\hat{V}_{h} = \caS^{2,2}_{h}$.\label{nex_NTA}}
\end{center}\end{table}
The computed efficiency indices are presented in Table~\ref{nex_NTA}. The dashed lines correspond to criterion \eqref{e_reli} being fulfilled with $C_\oplus = 5$. We see that the proposed estimator is robust with respect to the non-trivial PDE coefficient matrix $A$, and its performance is asymptotically similar to the case with the matrix $A$ being identity
\footnote{Some deviation could be attributed to the fact that we used same number of quadrature points for the evaluation of the matrices in both the cases, which is not sufficient when the PDE coefficients are of exponential form.}.
Also, the presentation of the timings is again omitted since the overall behavior is as in Example~\ref{ex:sin_6pix_3piy}.
In the next example, we consider a domain with a curved boundary (requiring a NURBS mapping for exact representation) and a problem whose solution has sharp peaks.
\begin{example}\label{ex:q_annulus}
\textbf{Domain with curved boundary:}
Consider the domain of a quarter annulus. In polar coordinates, $\Omega$ is defined by $(r,\phi) \in (1,2) \times (0,\tfrac{\pi}{2})$. The circular parts of the domain boundary are represented exactly by the NURBS geometry mapping of degree 2, i.e., we have $p=q=2$. We set $A = I$, and we prescribe the exact solution\[ u = (r-1)(r-2)\phi (\phi-\tfrac{\pi}{2}) e^{-\alpha (r \cos\phi-1)^2}. \]
We test our method with two values of $\alpha$, namely,
\begin{align*}
\text{Example~\ref{ex:q_annulus}.a:} \quad \alpha = 20, \qquad
\text{Example~\ref{ex:q_annulus}.b:} \quad \alpha = 50.
\end{align*}
In both examples, this function has zero Dirichlet boundary values and a peak at $x = 1$, the sharpness of which is determined by the value of $\alpha$. The exact solutions are depicted in Figure~\ref{fig_nexQAex}.
\begin{figure}[!ht]\centering
\psfrag{z0}[tr][tr]{\scriptsize 0}%
\psfrag{z1}[tr][tr]{\scriptsize }%
\psfrag{z2}[br][br]{\scriptsize 0.2}%
\psfrag{y0}[tr][tr]{\scriptsize 0}%
\psfrag{y1}[tr][tr]{\scriptsize 1}%
\psfrag{y2}[tr][tr]{\scriptsize 2}%
\psfrag{x0}[tl][tl]{\scriptsize 0}%
\psfrag{x1}[tl][tl]{\scriptsize 1}%
\psfrag{x2}[tr][tr]{\scriptsize 2}%
\subfigure[Example~\ref{ex:q_annulus}.a ($\alpha = 20)$.\label{fig_nexQAex_a}]{\includegraphics[height=4cm,width=5cm]{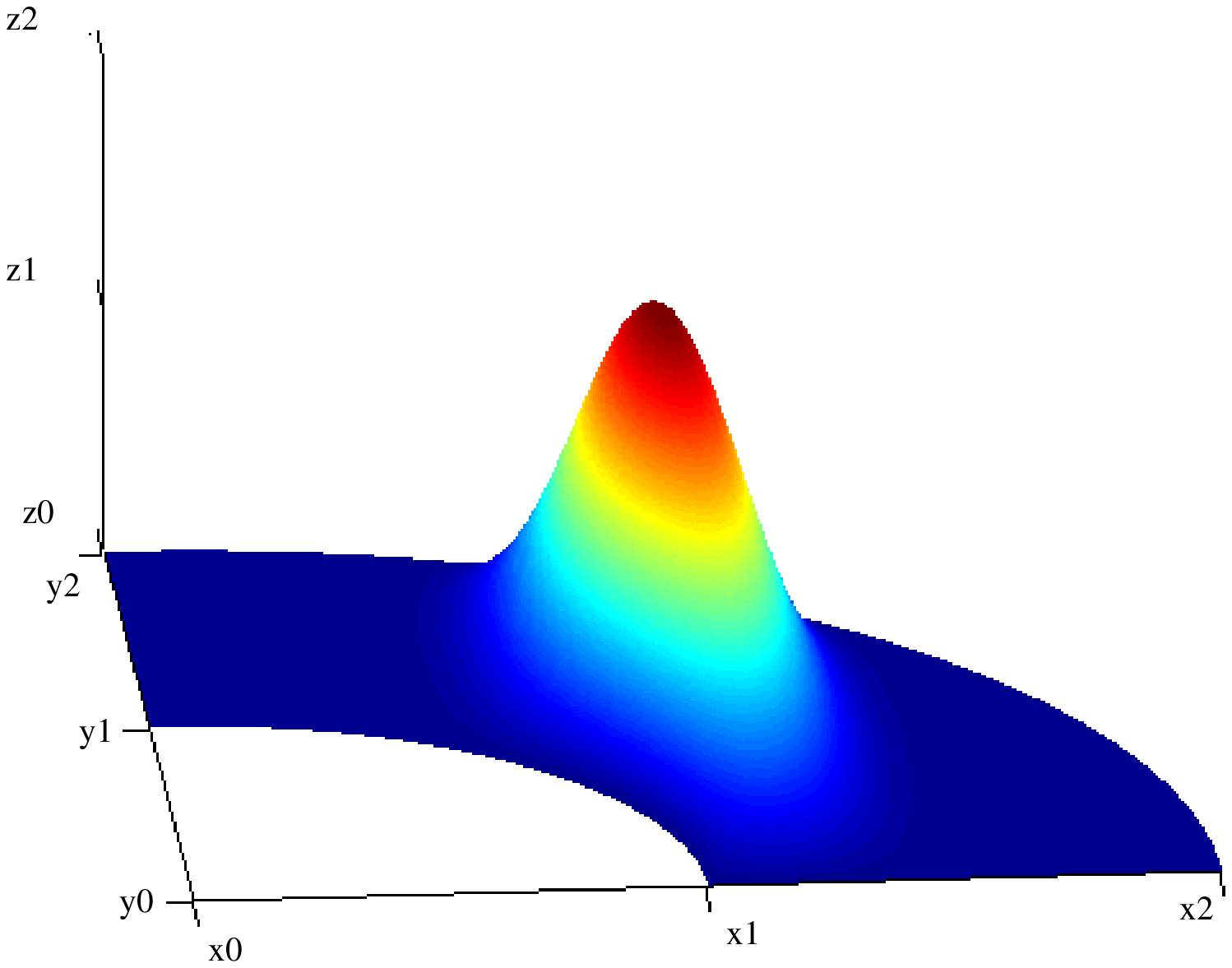}} \qquad
\subfigure[Example~\ref{ex:q_annulus}.b ($\alpha = 50)$.\label{fig_nexQAex_b}]{\includegraphics[height=4cm,width=5cm]{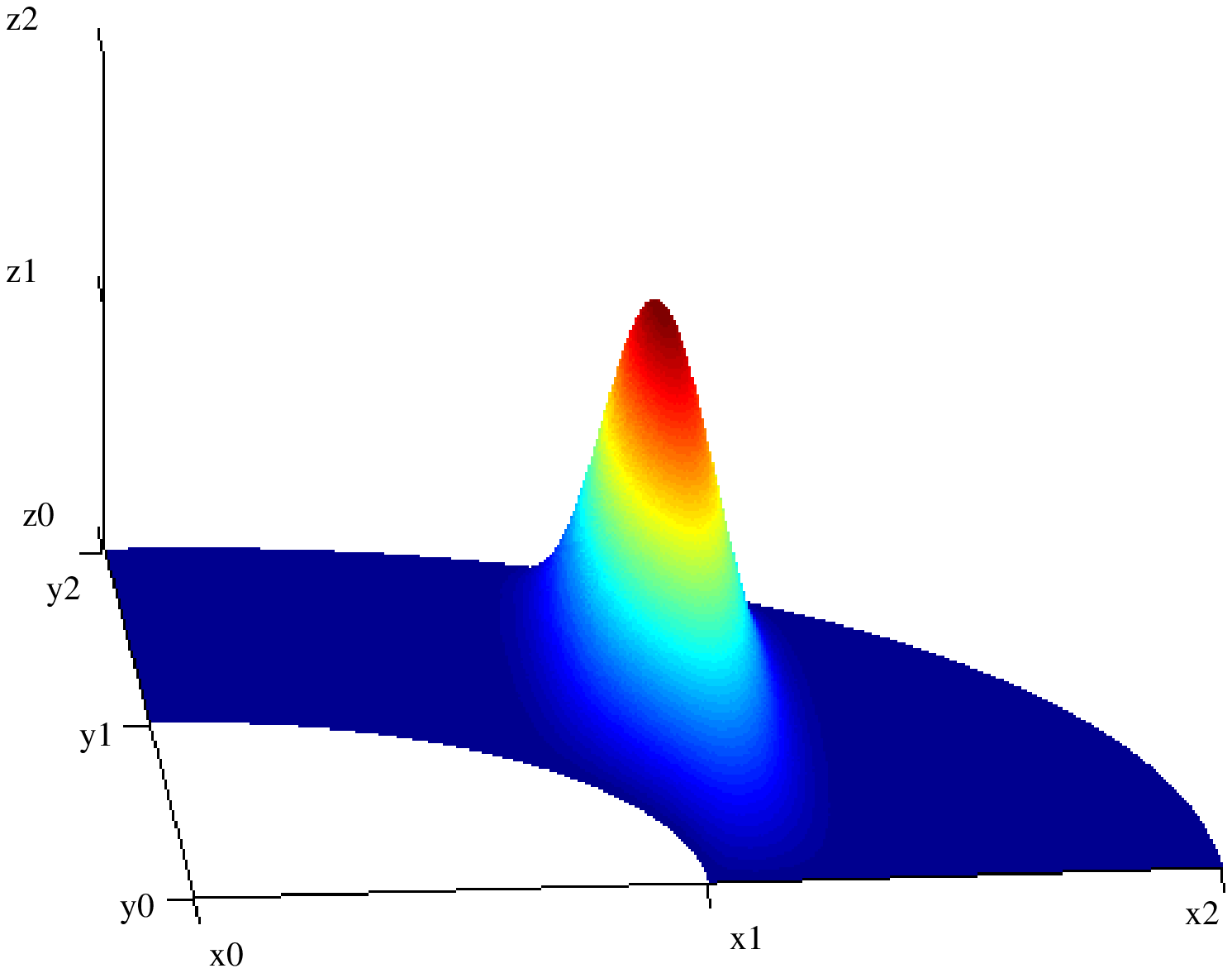}}\\
\caption{Exact solutions $u$ on $\Omega$, Example~\ref{ex:q_annulus}.\label{fig_nexQAex}}
\end{figure}
\end{example}
\begin{table}[!ht]\begin{center}
\begin{tabular}{|c|r|rr|}\hline
mesh-size & \multicolumn{1}{|c|}{$I_{\text{eff}}$}& \multicolumn{1}{|c}{$a_1 B_1$}
& \multicolumn{1}{c|}{$a_2 B_2$}
\\
\hline
\multicolumn{4}{c}{Case~\ref{ex:case_1}}\\
\hline
 $16 \times 8$  & { 1.83 } & 9.98e-04 & 3.59e-04 \\
 $32 \times 16$  & { 1.29 } & 2.08e-05 & 6.51e-06 \\\hdashline
 $64 \times 32$  & { 1.13 } & 1.04e-06 & 1.44e-07 \\
 $128 \times 64$  & { 1.07 } & 5.95e-08 & 4.00e-09 \\
 $256 \times 128$  & { 1.03 } & 3.58e-09 & 1.20e-10 \\
 $512 \times 256$  & { 1.02 } & 2.20e-10 & 3.67e-12 \\
\hline
\multicolumn{4}{c}{Case~\ref{ex:case_2}}\\
\hline
 $16 \times 8$  & { 13.99 } & 4.44e-02 & 3.51e-02  \\
 $32 \times 16$  & { 4.17 } & 2.00e-04 & 8.43e-05  \\
 $64 \times 32$  & { 1.31 } & 1.20e-06 & 3.66e-07  \\\hdashline
 $128 \times 64$  & { 1.06 } & 5.91e-08 & 3.36e-09  \\
 $256 \times 128$  & { 1.01 } & 3.51e-09 & 4.60e-11 \\
 $512 \times 256$  & { 1.00 } & 2.17e-10 & 6.96e-13 \\
\hline
\multicolumn{4}{c}{Case~\ref{ex:case_3}}\\
\hline
 $16 \times 8$  & { 24.87 } & 1.09e-01 & 1.42e-01  \\
 $32 \times 16$  & { 56.02 } & 2.92e-02 & 2.22e-02 \\
 $64 \times 32$  & { 10.42 } & 7.81e-05 & 2.16e-05 \\\hdashline
 $128 \times 64$  & { 1.11 } & 6.21e-08 & 6.61e-09  \\
 $256 \times 128$  & { 1.00 } & 3.49e-09 & 1.02e-11 \\
 $512 \times 256$  & { 1.00 } & 2.17e-10 & 3.27e-14 \\
\hline
\end{tabular}
\caption{Efficiency index and components of the majorant in Example~\ref{ex:q_annulus}.a ($\alpha = 20$), $\hat{V}_{h} = \caS^{2,2}_{h}$.\label{nexQAeff_a}}
\end{center}\end{table}
\setlength{\picw}{3.25cm}
\begin{figure}[!ht]
\centering
\subfigure[Exact, mesh $32\times 16$.]{\includegraphics[height=\picw,width=\picw]{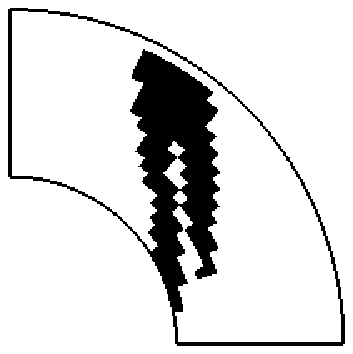}} \qquad
\subfigure[Exact, mesh $64\times 32$.]{\includegraphics[height=\picw,width=\picw]{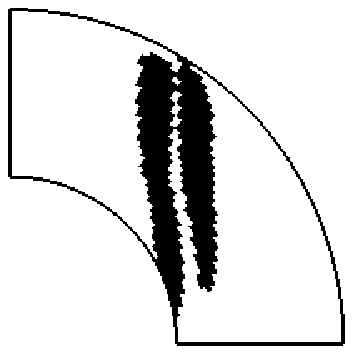}} \qquad
\subfigure[Exact, mesh $128\times 64$.]{\includegraphics[height=\picw,width=\picw]{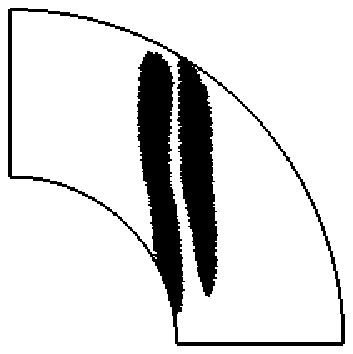}} \\
\subfigure[Case~\ref{ex:case_1}, mesh $32\times 16$.]{\includegraphics[height=\picw,width=\picw]{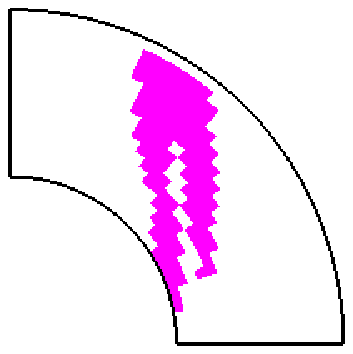}} \qquad
\subfigure[Case~\ref{ex:case_2}, mesh $64\times 32$.]{\includegraphics[height=\picw,width=\picw]{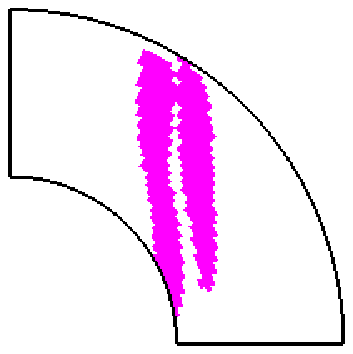}} \qquad
\subfigure[Case~\ref{ex:case_3}, mesh $128\times 64$.]{\includegraphics[height=\picw,width=\picw]{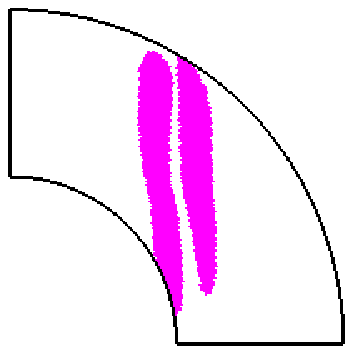}}
\caption{Marked cells with $\psi = 20\%$ in Example~\ref{ex:q_annulus}.a ($\alpha=20$), $\hat{V}_{h} = \caS^{2,2}_{h}$.\label{fig_nexQAmark_a}}
\end{figure}
\begin{table}[!ht]\begin{center}
\begin{tabular}{|c|r|rr|}\hline
 mesh-size & \multicolumn{1}{|c|}{$I_{\text{eff}}$}& \multicolumn{1}{|c}{$a_1 B_1$}
& \multicolumn{1}{c|}{$a_2 B_2$}
\\
\hline
\multicolumn{4}{c}{Case~\ref{ex:case_1}}\\
\hline 
 $16 \times 8$  & { 3.02 } & 2.94e-02 & 1.78e-02 \\
 $32 \times 16$  & { 1.92 } & 3.57e-04 & 1.83e-04 \\
 $64 \times 32$  & { 1.34 } & 9.15e-06 & 3.22e-06 \\\hdashline
 $128 \times 64$  & { 1.16 } & 4.67e-07 & 7.56e-08 \\
 $256 \times 128$  & { 1.08 } & 2.67e-08 & 2.12e-09 \\
 $512 \times 256$  & { 1.04 } & 1.60e-09 & 6.32e-11 \\
\hline
\multicolumn{4}{c}{Case~\ref{ex:case_2}}\\
\hline
 $16 \times 8$  & { 13.84 } & 3.45e-01 & 6.49e-01 \\ 
 $32 \times 16$  & { 16.76 } & 2.58e-02 & 1.53e-02 \\ 
 $64 \times 32$  & { 3.16 } & 4.10e-05 & 2.80e-05 \\ 
 $128 \times 64$  & { 1.25 } & 5.04e-07 & 1.24e-07 \\\hdashline 
 $256 \times 128$  & { 1.05 } & 2.61e-08 & 1.33e-09 \\ 
 $512 \times 256$  & { 1.01 } & 1.56e-09 & 1.89e-11 \\ 
\hline
\multicolumn{4}{c}{Case~\ref{ex:case_3}}\\
\hline
 $16 \times 8$  & { 17.20 } & 4.24e-01 & 1.11e+00 \\ 
 $32 \times 16$  & { 76.95 } & 3.24e-01 & 5.41e-01 \\ 
 $64 \times 32$  & { 83.72 } & 3.02e-02 & 1.83e-02 \\ 
 $128 \times 64$  & { 4.19 } & 4.64e-06 & 2.44e-06 \\\hdashline 
 $256 \times 128$  & { 1.04 } & 2.59e-08 & 1.02e-09 \\ 
 $512 \times 256$  & { 1.00 } & 1.55e-09 & 2.22e-12 \\ 
\hline
\end{tabular}
\caption{Efficiency index and components of the majorant in Example~\ref{ex:q_annulus}.b ($\alpha = 50$), $\hat{V}_{h} = \caS^{2,2}_{h}$.\label{nexQAeff_b}}
\end{center}\end{table}
\begin{figure}[!ht]
\centering
\subfigure[Exact, mesh $64\times 32$.]{\includegraphics[height=\picw,width=\picw]{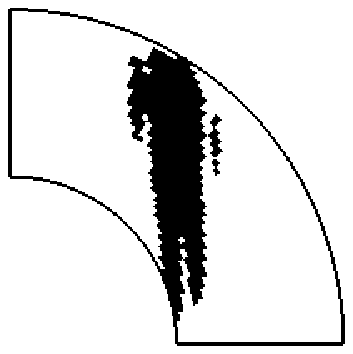}} \qquad
\subfigure[Exact, mesh $128\times 64$.]{\includegraphics[height=\picw,width=\picw]{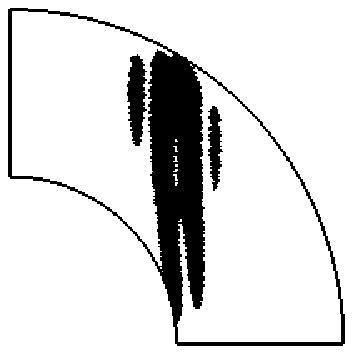}} \qquad
\subfigure[Exact, mesh $256\times 128$.]{\includegraphics[height=\picw,width=\picw]{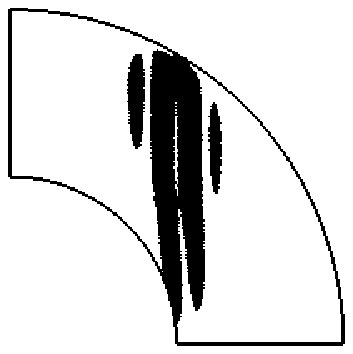}} \\
\subfigure[Case~\ref{ex:case_1}, mesh $64\times 32$.]{\includegraphics[height=\picw,width=\picw]{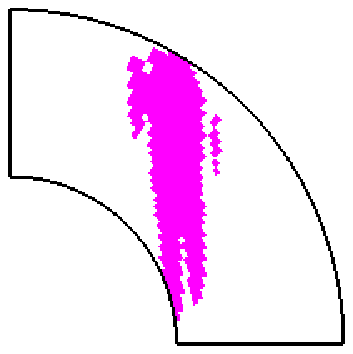}} \qquad
\subfigure[Case~\ref{ex:case_2}, mesh $128\times 64$.]{\includegraphics[height=\picw,width=\picw]{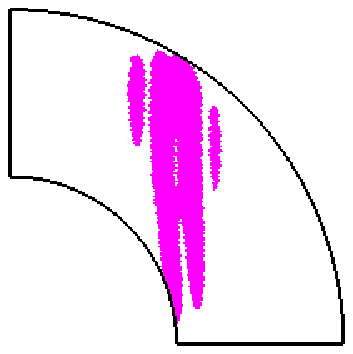}} \qquad
\subfigure[Case~\ref{ex:case_3}, mesh $256\times 128$.]{\includegraphics[height=\picw,width=\picw]{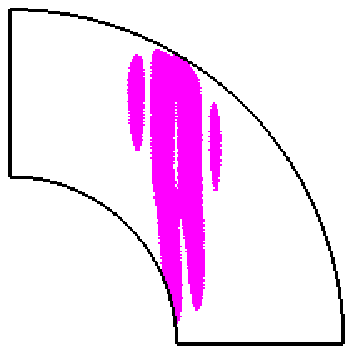}}
\caption{Marked cells with $\psi = 20\%$ in Example~\ref{ex:q_annulus}.b ($\alpha=50$), $\hat{V}_{h} = \caS^{2,2}_{h}$.\label{fig_nexQAmark_b}}
\end{figure}
In Tables~\ref{nexQAeff_a} and \ref{nexQAeff_b}, the efficiency index $I_{\text{eff}}$ and the magnitudes of $a_1 B_1$ and $a_2 B_2$ are presented for both the cases of $\alpha$. The dashed lines indicate the mesh-size after which criterion \eqref{e_reli} with $C_\oplus = 5$ is fulfilled. The distribution of the marked cells is depicted in Figures~\ref{fig_nexQAmark_a} and \ref{fig_nexQAmark_b}. As before, we observe that the error distribution is represented correctly if the criterion \eqref{e_reli} is fulfilled with $C_\oplus = 5$.
When comparing Tables~\ref{nexQAeff_a} and \ref{nexQAeff_b}, as well as  Figures~\ref{fig_nexQAmark_a} and \ref{fig_nexQAmark_b}, we notice that the more aggressive the mesh coarsening, and sharper the peak, the more refinements are needed before criterion \eqref{e_reli} is fulfilled and the error distribution is captured correctly. 
Since the timings in Example~\ref{ex:q_annulus}.a and Example~\ref{ex:q_annulus}.b show the same behavior as in Example~\ref{ex:sin_6pix_3piy}, both regarding assembling-time and solving-time, we omit the presentation of these numbers. Clearly, Case~\ref{ex:case_3} outperforms Cases~\ref{ex:case_1} and~\ref{ex:case_2} in terms of cost-efficiency.
In the next example, we test the proposed estimator in a basic adaptive refinement scheme.

\begin{example}\label{ex:adap_ref}
\textbf{Adaptive Refinement:}
The exact solution for this example is given by
\[
u = ( x^2-x)(y^2-y)  e^{-100  |(x,y)- (0.8,0.05)|^2 -100 |(x,y)- (0.8,0.95)|^2}.
\]
The computational domain is again the unit square $\Omega = (0,1)^2$, and is represented by B-splines of degree $p=q=2$. The function $u$, which is illustrated in Figure~\ref{fig_nexAda_exsol}, has zero Dirichlet boundary values and has two peaks at the coordinates $(0.8,0.05)$ and $(0.8,0.95)$.
\end{example}
In this example, we test a very basic adaptive refinement procedure using tensor-product B-splines. The discussion of isogeometric local refinement schemes  is out of the scope of this paper (see Section~\ref{sec_intro} for an overview on local refinement methods).
\begin{figure}[!ht]\centering
\psfrag{r1}[tl][tl]{\footnotesize $1$}%
\psfrag{r0}[tl][tl]{\footnotesize $0$}%
\psfrag{l1}[tr][tr]{\footnotesize $1$}%
\psfrag{l0}[tr][tr]{\footnotesize $0$}%
\includegraphics[height=5cm,width=5cm]{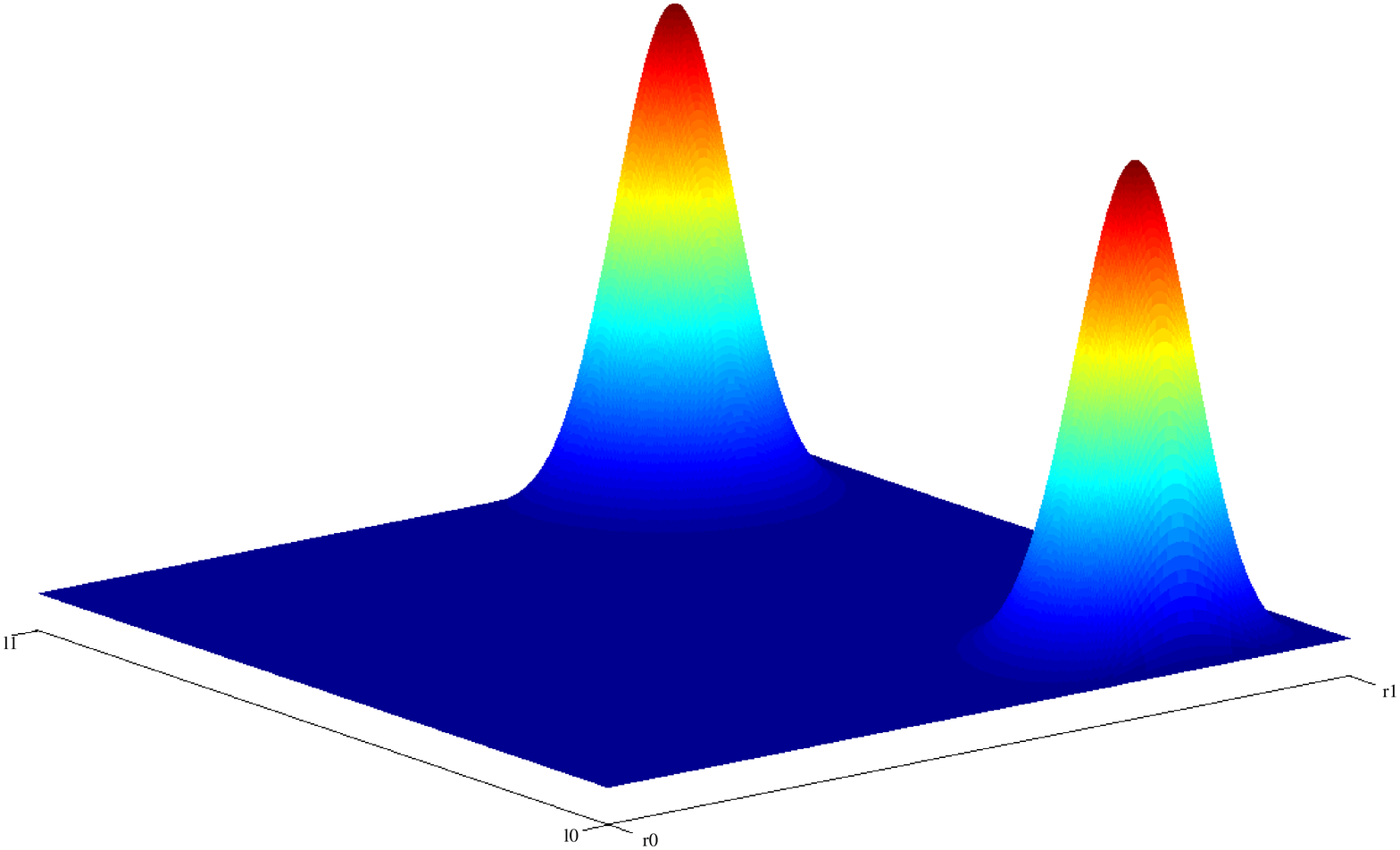}
\caption{Exact solution, Example~\ref{ex:adap_ref}.\label{fig_nexAda_exsol}}
\end{figure}

\begin{figure}[!ht]\centering
\includegraphics[scale=0.65]{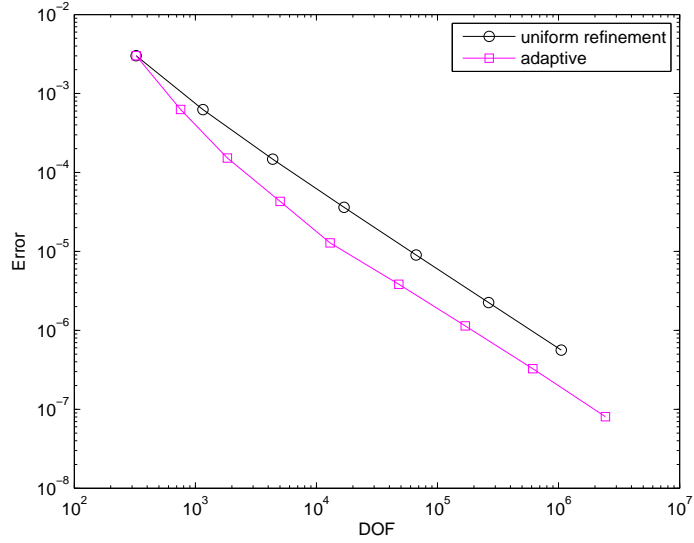}
\caption{Error convergence, Example~\ref{ex:adap_ref}, Cases as in Table~\ref{nexAda_Ieff}.\label{fig_nexAda_errconv}}
\end{figure}

\begin{table}[!ht]\begin{center}
\begin{tabular}{|c|r|rr|c|}\hline
 mesh-size & \multicolumn{1}{|c|}{$I_{\text{eff}}$}& \multicolumn{1}{|c}{$a_1 B_1$}
& \multicolumn{1}{c|}{$a_2 B_2$}
& Case \\
\hline
$16 \times 16$  & { 3.77 } & 9.39e-05 & 3.49e-05 & \ref{ex:case_1}\\
$25 \times 26$  & { 2.06 } & 8.62e-07 & 8.11e-07 & \ref{ex:case_1}\\
$38 \times 44$  & { 1.69 } & 4.30e-08 & 2.35e-08 & \ref{ex:case_1}\\
$64 \times 74$  & { 1.47 } & 2.79e-09 & 1.19e-09 & \ref{ex:case_1}\\
$92 \times 136$  & { 2.82 } & 8.19e-10 & 4.87e-10 & \ref{ex:case_2}\\
$184 \times 256$  & { 1.30 } & 2.05e-11 & 4.55e-12 & \ref{ex:case_2}\\
$341 \times 492$  & { 1.11 } & 1.45e-12 & 1.47e-13 & \ref{ex:case_2}\\
$652 \times 934$  & { 1.84 } & 2.55e-13 & 1.07e-13 & \ref{ex:case_3}\\
$1304 \times 1868$& { 1.09 } & 7.40e-15 & 3.63e-16 & \ref{ex:case_3}\\
\hline
\end{tabular}
\caption{Efficiency index, components of the majorant and applied cases in Example~\ref{ex:adap_ref}, $\hat{V}_{h} = \caS^{2,2}_{h}$. \label{nexAda_Ieff}}
\end{center}\end{table}

\begin{figure}[!ht]\centering%
\subfigure[Mesh 4.\label{fig_nexAda_mesh4}]{\includegraphics[height=4cm,width=4cm]{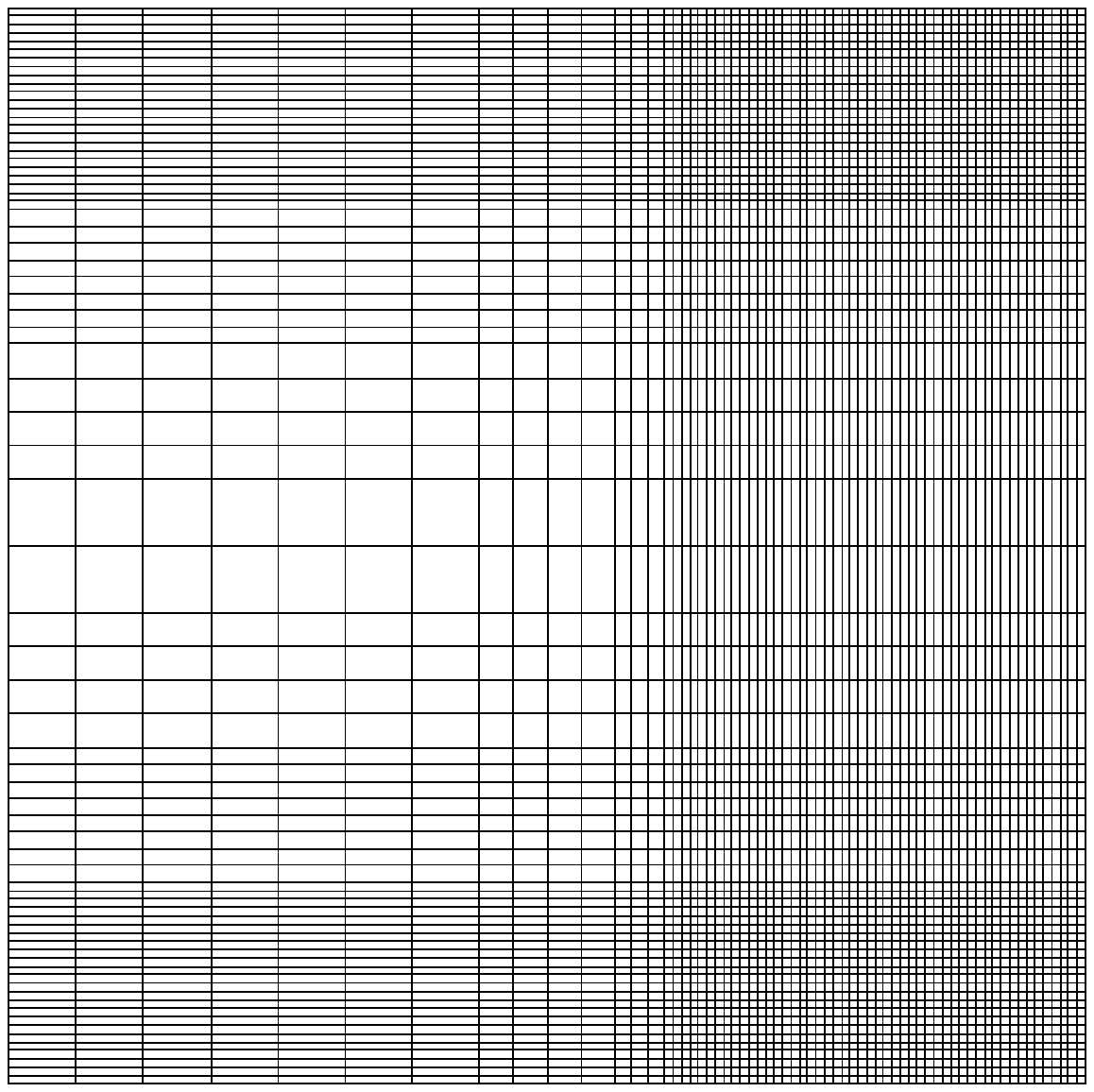}} \qquad
\subfigure[Cells marked by exact error on mesh 4.\label{fig_nexAda_err4}]{\includegraphics[height=4cm,width=4cm]{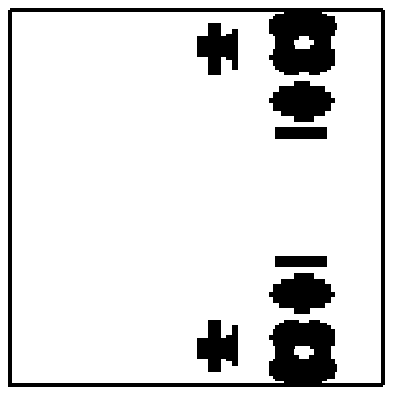}} \qquad
\subfigure[Cells marked by estimator on mesh 4.\label{fig_nexAda_est4}]{\includegraphics[height=4cm,width=4cm]{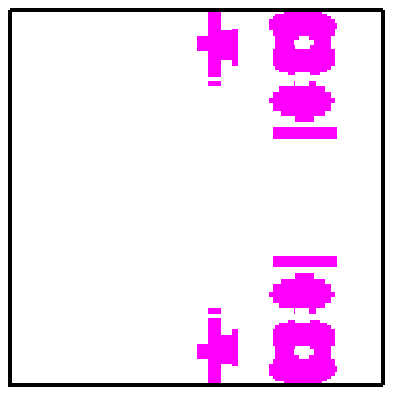}} \\
\subfigure[Mesh 7.\label{fig_nexAda_mesh7}]{\includegraphics[height=4cm,width=4cm]{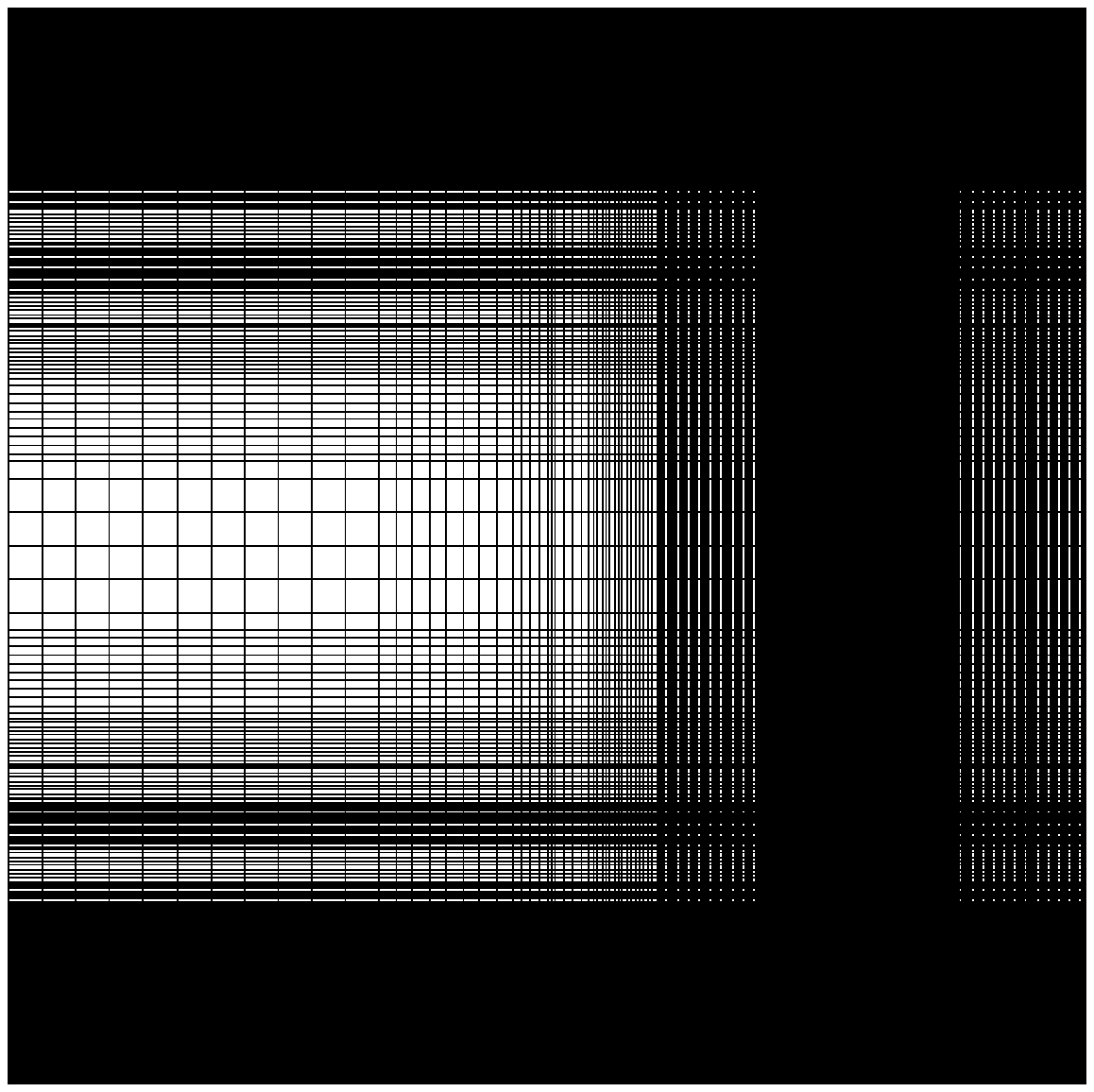}} \qquad
\subfigure[Cells marked by exact error on mesh 7.\label{fig_nexAda_err7}]{\includegraphics[height=4cm,width=4cm]{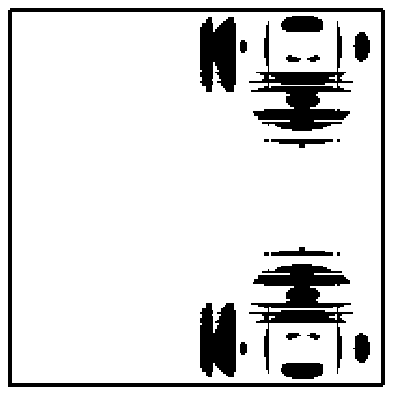}} \qquad
\subfigure[Cells marked by estimator on mesh 7.\label{fig_nexAda_est7}]{\includegraphics[height=4cm,width=4cm]{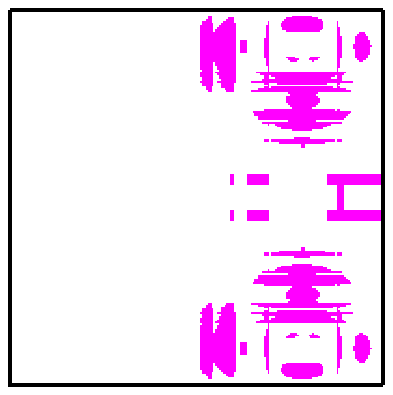}} \\
\subfigure[Mesh 9.\label{fig_nexAda_mesh9}]{\includegraphics[height=4cm,width=4cm]{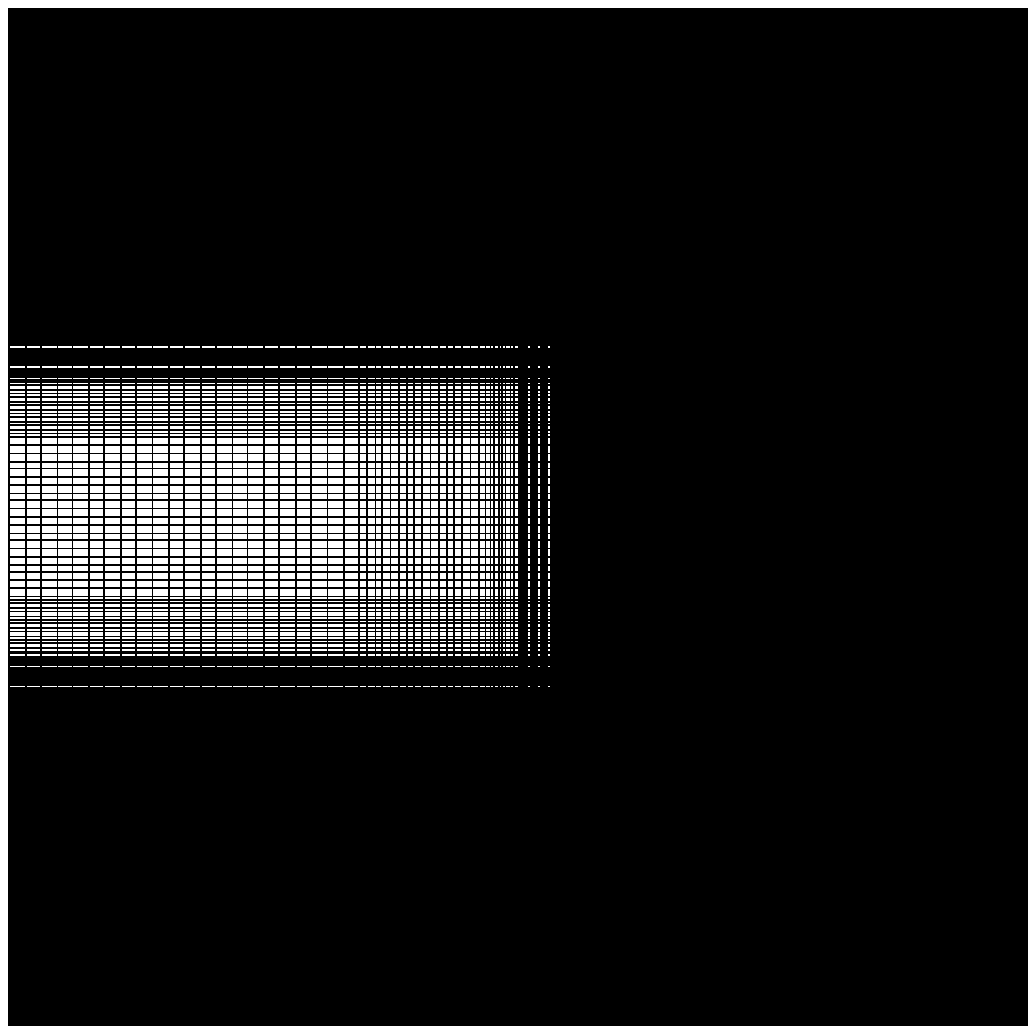}} \qquad
\subfigure[Cells marked by exact error on mesh 9.\label{fig_nexAda_err9}]{\includegraphics[height=4cm,width=4cm]{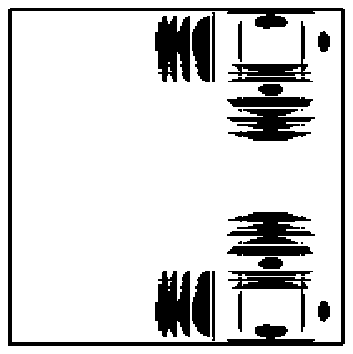}} \qquad
\subfigure[Cells marked by estimator on mesh 9.\label{fig_nexAda_est9}]{\includegraphics[height=4cm,width=4cm]{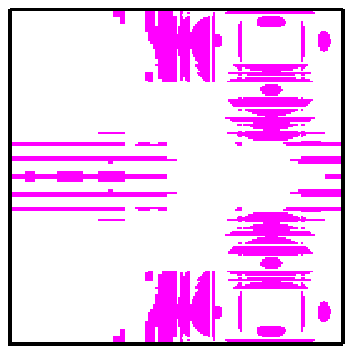}} 
\caption{Meshes and marked cells in Example~\ref{ex:adap_ref}, Cases as in Table~\ref{nexAda_Ieff}, $\psi = 25\%$, $\hat{V}_{h} = \caS^{2,2}_{h}$.\label{fig_nexAda_meshes}}
\end{figure}
We apply adaptive refinement based on a marking with $\psi = 25\%$, starting on an initial mesh $16\times 16$. On the first four steps, we apply Case~\ref{ex:case_1}, then Case~\ref{ex:case_2} on the next three steps, and thereafter Case~\ref{ex:case_3}. The efficiency indices and the applied cases are shown in Table~\ref{nexAda_Ieff}. In Figure~\ref{fig_nexAda_meshes}, the meshes and the marked cells are shown for steps~4, 7, and~9. Clearly, the correct areas of the domain are identified and marked for refinement. 
Since the solution of the problem is sufficiently regular, the error plots in Figure~\ref{fig_nexAda_errconv} show that the adaptive refinement converges with the same rate as the uniform refinement, but with a better constant. However, due to the tensor-product structure of the mesh, many superfluous DOFs are inserted outside of the marked areas, which worsens the rate of convergence for given total DOFs. 
Next, we consider a classical example for a posteriori error estimation and adaptive refinement studies.
\begin{example}\label{ex:l_shaped}
\textbf{L-shaped domain:}
We consider the Laplace equation
\begin{equation}
\Delta u = 0 \label{nexLSlaplace}
\end{equation}
with Dirichlet boundary conditions on the L-shaped domain $\Omega = (-1,1)^2 \backslash [0,1]^2$. In this example, we use a bilinear geometry mapping, i.e., $p=q=1$. The function
\begin{equation*}
u(r,\phi) = r^{\frac{2}{3}} \sin( (2\phi-\pi)/3 )
\end{equation*}
solves \eqref{nexLSlaplace} and is used to prescribe Dirichlet boundary conditions. The solution has a singularity at the re-entrant corner at $(0,0)$.
\end{example}
We compare uniform refinement and adaptive refinement in the tensor-product setting. In this example we set $\psi = 10\%$, and to avoid the pollution near the singularity, we only use Case~\ref{ex:case_1} in the majorant computations.
\begin{table}[!ht]\begin{center}
\begin{tabular}{|c|r|rr|c}\hline
 mesh-size & \multicolumn{1}{|c|}{$I_{\text{eff}}$}& \multicolumn{1}{|c}{$a_1 B_1$}
& \multicolumn{1}{c|}{$a_2 B_2$}
\\
\hline
$16 \times 8$  & { 1.1785 } & 5.67e-02 & 1.71e-03 \\
$32 \times 16$  & { 1.1401 } & 3.44e-02 & 8.98e-04 \\
$64 \times 32$  & { 1.1116 } & 2.09e-02 & 4.72e-04 \\
$128 \times 64$  & { 1.0898 } & 1.28e-02 & 2.49e-04 \\
$256 \times 128$  & { 1.0729 } & 7.87e-03 & 1.32e-04 \\
$512 \times 256$  & { 1.0593 } & 4.86e-03 & 7.01e-05 \\
$1024 \times 512$  & { 1.0485 } & 3.01e-03 & 3.73e-05 \\
\hline
\end{tabular}
\caption{Efficiency index and components of the majorant in Example~\ref{ex:l_shaped}, Case~\ref{ex:case_1}, uniform refinement, $\hat{V}_{h} = \caS^{1,1}_{h}, \hat{Y}_h = \caS^{2,2}_{h} \otimes \caS^{2,2}_{h}$.\label{nexLSeff}}
\end{center}\end{table}
\begin{table}[!ht]\begin{center}
\begin{tabular}{|c|r|rr|c}\hline
 mesh-size & \multicolumn{1}{|c|}{$I_{\text{eff}}$}& \multicolumn{1}{|c}{$a_1 B_1$}
& \multicolumn{1}{c|}{$a_2 B_2$}\\
\hline
$16 \times 8$  & { 1.1785 } & 5.67e-02 & 1.71e-03 \\
$22 \times 11$  & { 1.1839 } & 2.68e-02 & 8.66e-04 \\
$30 \times 16$  & { 1.1749 } & 1.37e-02 & 4.32e-04 \\
$39 \times 23$  & { 1.1622 } & 7.22e-03 & 2.24e-04 \\
$55 \times 37$  & { 1.1635 } & 3.52e-03 & 1.10e-04 \\
$87 \times 60$  & { 1.1634 } & 1.75e-03 & 5.41e-05 \\
$133 \times 101$  & { 1.1525 } & 9.25e-04 & 2.69e-05 \\
\hline
\end{tabular}
\caption{Efficiency index and components of the majorant in Example~\ref{ex:l_shaped}, Case~\ref{ex:case_1}, adaptive refinement, $\hat{V}_{h} = \caS^{1,1}_{h}, \hat{Y}_h = \caS^{2,2}_{h} \otimes \caS^{2,2}_{h}$.\label{nexLSeff_ada}}
\end{center}\end{table}
The magnitudes of the components $a_1 B_1$ and $a_2B_2$, which are presented in Table~\ref{nexLSeff} for uniform refinement, and in Table~\ref{nexLSeff_ada} for adaptive refinement, show that the criterion \eqref{e_reli} with $C_\oplus = 5$ is fulfilled on all the considered meshes. 
The error plots presented in Figure~\ref{fig_Lada_errconv} show the expected faster convergence on the adaptively refined mesh, even though we are only using tensor-product splines. In Figure~\ref{fig_Lada_meshes}, meshes and marked cells are shown for steps 2 and 6, again indicating that the error indicator correctly identifies the corner singularity. 
\begin{figure}[!ht]\centering
\includegraphics[scale=0.65]{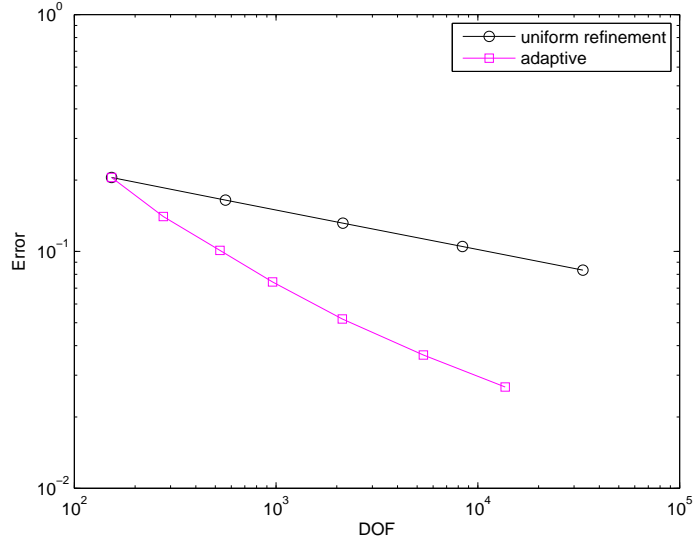}
\caption{Error convergence, Example~\ref{ex:l_shaped}, Case~\ref{ex:case_1}.\label{fig_Lada_errconv}}
\end{figure}

\begin{figure}[!ht]\centering%
\subfigure[Mesh after 2 refinements.\label{fig_Lada_mesh2}]{\includegraphics[height=4cm,width=4cm]{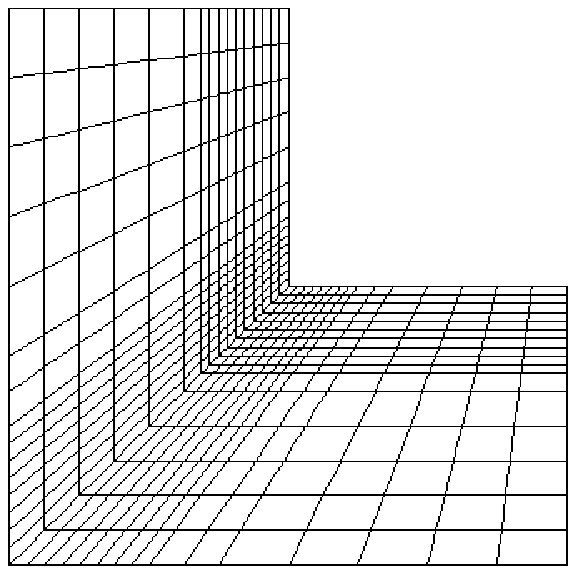}} 
\qquad
\subfigure[Cells marked by exact error on mesh 2.\label{fig_Lada_err2}]{\includegraphics[height=4cm,width=4cm]{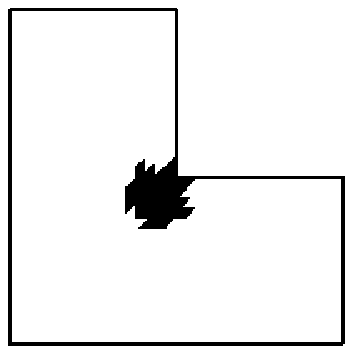}}
\qquad
\subfigure[Cells marked by estimator on mesh 2.\label{fig_Lada_est2}]{\includegraphics[height=4cm,width=4cm]{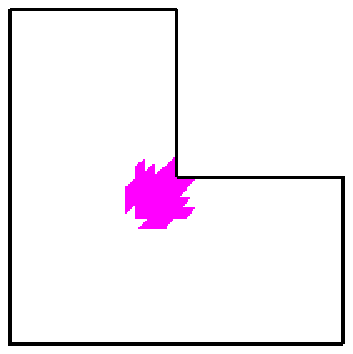}} 
\\
\subfigure[Mesh after 6 refinements.\label{fig_Lada_mesh6}]{\includegraphics[height=4cm,width=4cm]{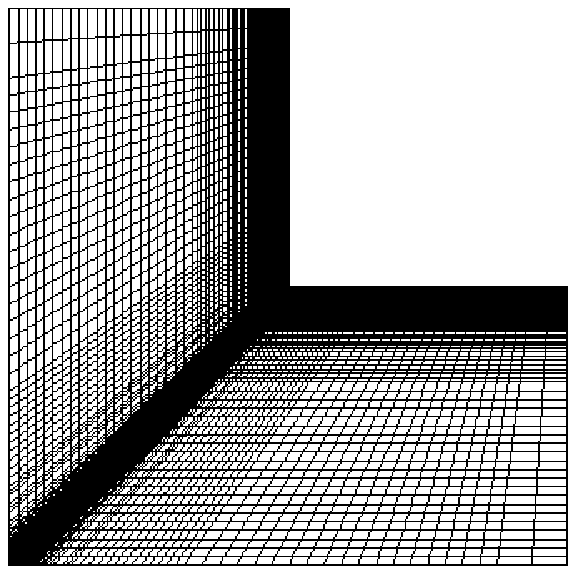}} 
\qquad
\subfigure[Cells marked by exact error on mesh 6.\label{fig_Lada_err6}]{\includegraphics[height=4cm,width=4cm]{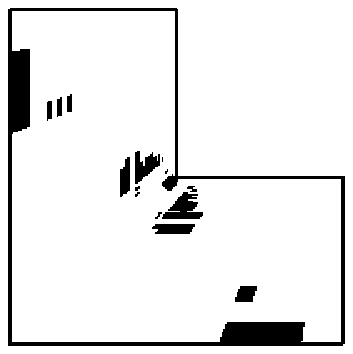}}
\qquad
\subfigure[Cells marked by estimator on mesh 6.\label{fig_Lada_est6}]{\includegraphics[height=4cm,width=4cm]{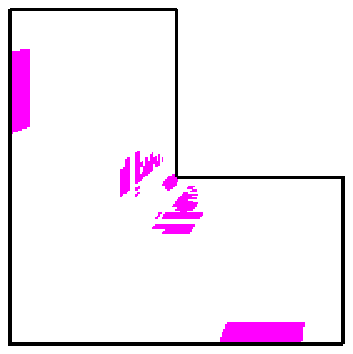}} 
\caption{Meshes and marked cells in Example~\ref{ex:l_shaped}, Case~\ref{ex:case_1}, $\psi = 10\%$, $\hat{V}_{h} = \caS^{1,1}_{h}, \hat{Y}_h = \caS^{2,2}_{h} \otimes \caS^{2,2}_{h}$.\label{fig_Lada_meshes}}
\end{figure}

In our final example, we consider an advection dominated advection diffusion equation to see the performance of the estimator for sharp boundary layers.
\begin{example}\label{ex:adv_diff}
\textbf{Advection dominated advection diffusion equation:}
We consider the advection diffusion equation with Dirichlet boundary conditions on the unit square $\Omega = (0,1)^2$, with $p=q=2$, i.e.,
\begin{align*}
\begin{array}{r@{\ =\ }ll}
-\kappa \Delta u + b\cdot \nabla u & 0 \quad &\text{in~}\Omega,\\
u & u_D \quad &\text{on~} \partial \Omega,
\end{array}
\end{align*}
where
\begin{align*}
\kappa = 10^{-6}, \qquad
b = (\cos \tfrac{\pi}{3}, \sin \tfrac{\pi}{3})^T, \qquad
u_D = \left\{\begin{array}{ll}
1,\text{~if~}y=0 \\
0,\text{~else}
\end{array}\right. .
\end{align*}
\end{example}
We use the standard streamline upwind Petrov-Galerkin (SUPG) scheme for the stabilization. The stabilization parameter $\tau$ is set to $\tau(Q) = h_b(Q)/2|b|$, where $h_b(Q)$ is the diameter of the cell $Q$ in direction of the flow $b$, and $|b|$ is the magnitude of the vector $b$. For advection diffusion problems, we have to adapt the majorant. Since the principle method is the same, we refer the reader to \cite[Section~4.3.1]{Repin08_book} for a detailed discussion. In this special case, where $A = \kappa I$ with $\kappa \ll |b|$, and with constant velocity vector $b$, the majorant $M_{\oplus,\text{adv}}^{2}$ for the advection diffusion problem is given by
\[
M_{\oplus,\text{adv}}^{2} = (1+\beta) \|  A \nabla u_h - y \|_{\bar{A}}^2 + (1+\tfrac{1}{\beta}) C_{\Omega}^2 \| \dvg y + f - b\cdot\nabla u_h \|^2.
\]
The strong advection and the discontinuous boundary conditions result in sharp layers. In Figure~\ref{fig_E4_lay}, the expected positions of the layers are indicated by dashed lines. 
\begin{table}[!ht]\begin{center}
\begin{tabular}{|c|rr|}\hline
 mesh-size &  \multicolumn{1}{|c}{$a_1 B_1$}& \multicolumn{1}{c|}{$a_2 B_2$} \\
\hline
\multicolumn{3}{c}{Case~\ref{ex:case_1}}\\
\hline
 $16 \times 16$  & 1.98e-07 & 3.18e-10 \\
 $64 \times 64$  &  6.45e-07 & 1.15e-09 \\
 $256 \times 256$   & 2.28e-06 & 4.33e-09 \\
\hline
\multicolumn{3}{c}{Case~\ref{ex:case_2}}\\
\hline
 $16 \times 16$  & 1.83e-06 & 9.66e-10 \\
 $64 \times 64$   & 6.50e-06 & 3.65e-09 \\
 $256 \times 256$   & 1.86e-05 & 1.24e-08 \\
\hline
\multicolumn{3}{c}{Case~\ref{ex:case_3}}\\
\hline
 $16 \times 16$  & 3.24e-06 & 1.29e-09 \\ 
 $64 \times 64$  &  2.07e-05 & 6.52e-09 \\ 
 $256 \times 256$  & 6.86e-05 & 2.38e-08 \\ 
\hline
\end{tabular}\caption{Comparison of terms $a_1B_1$ and $a_2 B_2$ in Example~\ref{ex:adv_diff}, $\hat{V}_{h} = \caS^{2,2}_{h}$.\label{nexADeff}}
\end{center}\end{table}
\setlength{\picw}{4cm}
\begin{figure}[!ht]\centering
\psfrag{x0}{{\footnotesize $0$}}%
\psfrag{x1}{{\footnotesize $1$}}%
\psfrag{y0}{{\footnotesize $0$}}%
\psfrag{y1}{{\footnotesize $1$}}%
\psfrag{x}{{\footnotesize $x$}}%
\psfrag{y}{{\footnotesize $y$}}%
\psfrag{60deg}{{\footnotesize $60^\circ$}}%
\subfigure[Expected positions of sharp layers.\label{fig_E4_lay}]{\includegraphics[height=\picw,width=\picw]{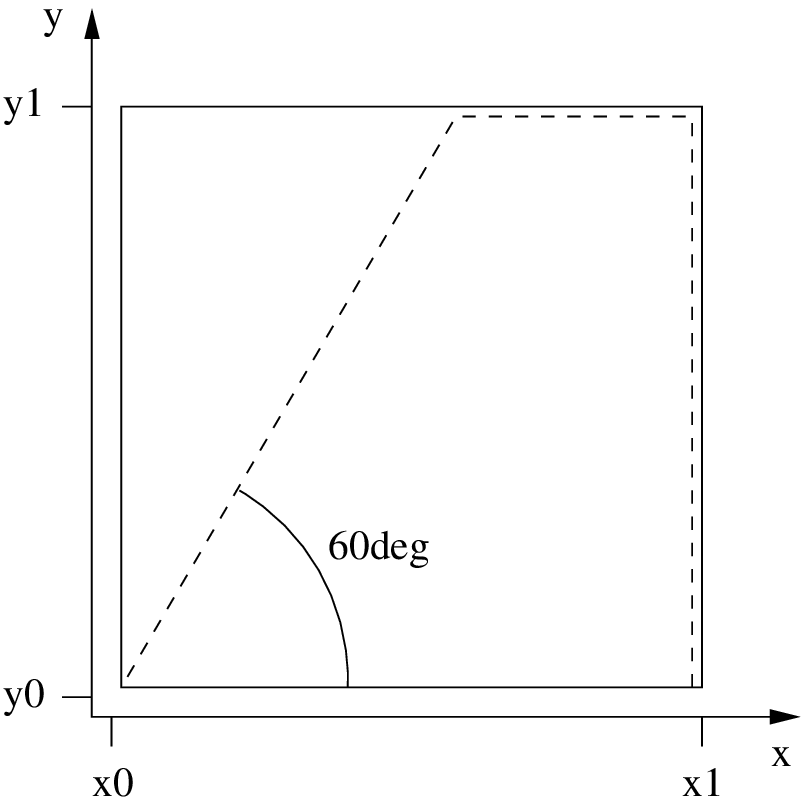}} \qquad
\subfigure[Marked cells with $\psi=20\%$, mesh-size $64\times 64$.\label{fig_E4_064}]{\includegraphics[height=\picw,width=\picw]{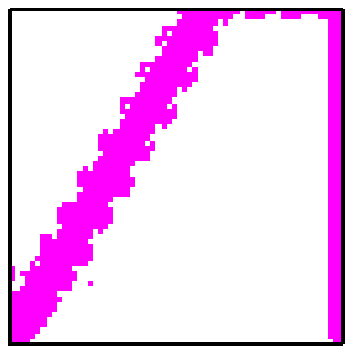}} \qquad
\subfigure[Marked cells with $\psi=10\%$, mesh-size $256\times 256$.\label{fig_E4_256}]{\includegraphics[height=\picw,width=\picw]{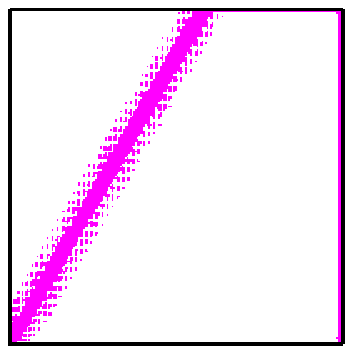}}
\caption{Expected layers and marked cells in Example~\ref{ex:adv_diff}, $\hat{V}_{h} = \caS^{2,2}_{h}, \hat{Y}_h = \caS^{6,6}_{4h} \otimes \caS^{6,6}_{4h}$.\label{fig_nexADmark}}
\end{figure}
\begin{table}[!ht]\begin{center}
\begin{tabular}{|c|rr|rrr|rrr|rrr|}\hline
mesh-size & 
\multicolumn{2}{|c|}{$\#$DOF} & 
\multicolumn{3}{|c|}{assembling-time} & 
\multicolumn{3}{|c|}{solving-time} & 
\multicolumn{3}{|c|}{sum} 
\\
& $u_h$ & $y_h$ 
& \multicolumn{1}{|c}{\emph{pde}} 
& \multicolumn{1}{c}{\emph{est}} 
& \multicolumn{1}{c|}{$\frac{\emph{est}}{\emph{pde}}$} 
& \multicolumn{1}{|c}{\emph{pde}} 
& \multicolumn{1}{c}{\emph{est}} 
& \multicolumn{1}{c|}{$\frac{\emph{est}}{\emph{pde}}$} 
& \multicolumn{1}{|c}{\emph{pde}} 
& \multicolumn{1}{c}{\emph{est}} 
& \multicolumn{1}{c|}{$\frac{\emph{est}}{\emph{pde}}$} 
\\
\hline 
\multicolumn{12}{c}{Case~\ref{ex:case_1}}\\
\hline
 $16 \times 16$ & 324 & 722 & 0.25 & 0.39 & { 1.56 } & $<$0.01 & 0.01 & { 6.38 } & 0.25 & 0.40 & { 1.59 } \\
 $64 \times 64$ & 4356 & 8978 & 3.25 & 5.32 & { 1.63 } & 0.03 & 0.26 & { 8.64 } & 3.28 & 5.58 & { 1.70 } \\
 $256 \times 256$ & 66564 & 134162 & 51.22 & 94.15 & { 1.84 } & 0.85 & 8.84 & { 10.35 } & 52.07 & 102.99 & { 1.98 } \\
\hline 
\multicolumn{12}{c}{Case~\ref{ex:case_2}}\\
\hline
 $16 \times 16$ & 324 & 288 & 0.21 & 0.14 & { 0.67 } & $<$0.01 & $<$0.01 & { 0.50 } & 0.21 & 0.14 & { 0.67 } \\
 $64 \times 64$ & 4356 & 2592 & 3.26 & 2.10 & { 0.64 } & 0.03 & 0.06 & { 2.01 } & 3.29 & 2.16 & { 0.66 } \\
 $256 \times 256$ & 66564 & 34848 & 50.83 & 35.58 & { 0.70 } & 0.85 & 2.30 & { 2.70 } & 51.68 & 37.87 & { 0.73 } \\
\hline 
\multicolumn{12}{c}{Case~\ref{ex:case_3}}\\
\hline
 $16 \times 16$ & 324 & 200 & 0.26 & 0.10 & { 0.39 } & $<$0.01 & $<$0.01 & { 0.58 } & 0.26 & 0.10 & { 0.40 } \\ 
 $64 \times 64$ & 4356 & 968 & 3.41 & 1.21 & { 0.35 } & 0.04 & 0.01 & { 0.26 } & 3.44 & 1.22 & { 0.35 } \\ 
 $256 \times 256$ & 66564 & 9800 & 52.40 & 19.83 & { 0.38 } & 1.02 & 0.91 & { 0.89 } & 53.42 & 20.74 & { 0.39 } \\ 
\hline\end{tabular}
\caption{Timings in Example~\ref{ex:adv_diff}, $\hat{V}_{h} = \caS^{2,2}_{h}$.\label{nexADtime}}
\end{center}\end{table}
The magnitudes of $a_1B_1$ and $a_2B_2$ presented in Table~\ref{nexADeff} indicate that the criterion \eqref{e_reli} with $C_\oplus = 5$ is fulfilled on all the considered meshes. The distribution of the marked cells presented in Figures~\ref{fig_E4_064} and \ref{fig_E4_256} provides the visual indication that the expected layers are accurately detected by the error estimator.
For this example, the timings presented in Table~\ref{nexADtime} show that, unlike the previous examples, assembling and solving the system for the estimator is faster than for the original problem not only in Case~\ref{ex:case_3} (less than $1/2$ of the original cost), but also in Case~\ref{ex:case_2} (about $2/3$ of the original cost). This is due to the SUPG stabilization which is costlier than computing the additional term $b\cdot \nabla u_h$ in the majorant $M_{\oplus,\text{adv}}^{2}$.

\section{Conclusion}
\label{sec_Conc}

We have proposed a method for cost-efficient computation of guaranteed and sharp a posteriori error estimates in IGA. This method relies only on the use of NURBS basis functions, without the need for constructing complicated basis functions in $H(\Omega,\dvg)$.
We have discussed different settings which allow the user to balance the sharpness of the bound and accurate error distribution on the one hand, and the required computational cost of the error estimator on the other hand (see Remark~\ref{rmk_balance}). For the presented settings, we have derived a quality criterion, which is easy to check numerically and which indicates whether the computed estimate is sharp or not (see Remark~\ref{rmk_Coplus}).
Two properties of NURBS basis functions are exploited. Firstly, the basis functions are, in general, automatically in $H(\Omega,\dvg)$ due to their high smoothness. Without this property, we could not use NURBS of equal degree for both components as basis functions for the minimizing function $y_h$. Secondly, increasing the polynomial degree of NURBS basis functions adds only few DOFs. This fact is necessary for keeping the computational cost of the majorant as low as possible (see Remark~\ref{rem_choiceYh}). It is important to note that none of these properties are possible in FEM discretizations based on $C^{0}$ basis functions.
Apart from the topical interest of a posteriori error estimation and adaptivity, the presented method should also be of interest in parametrization of computational domain. For example, for $r$- refinement in IGA, i.e., to optimize the placement of inner control points, the proposed estimator can be used to accurately detect the regions with large error and then use the optimization algorithm to reposition the control points. Such a problem of $r$- refinement has been studied in \cite{Xu11_2021, XuMDG-13}.
Finally, in this paper, we have only considered tensor-product NURBS discretizations. While the extension of this method to locally refined isogeometric discretizations and also to three dimensions is, in theory, straightforward, the actual performance and efficiency of the error estimator on such methods and meshes is the subject of further studies.

\section*{Acknowledgements}
The authors are grateful to Prof.\ Sergey I.\ Repin, V.A.~Steklov Institute of Mathematics, St.~Petersburg, for helpful discussions. Authors are also thankful to unknown referees for their helpful comments. The support from the Austrian Science Fund (FWF) through the project P21516-N18 is gratefully acknowledged.


\end{document}